\newtheorem{theorem}{Theorem}[section] 
\newtheorem{lemma}[theorem]{Lemma}
\newtheorem{axiom}[theorem]{Axiom}
\newtheorem{corollary}[theorem]{Corollary} 
\theoremstyle{definition}
\definecolor{brightmaroon}{rgb}{0.76, 0.13, 0.28}
\newcommand{\Ker}{\mathsf{Ker}}
\newcommand{\im}{\mathsf{Im}}
\begin{document}
\title{Isbell's subfactor projections in a noetherian form}

\author{Kishan Dayaram}
\address{Department of Mathematics and Applied Mathematics, University of Johannesburg, South Africa.}
\email{220157896@student.uj.ac.za}

\author{Amartya Goswami} 
\address{Department of Mathematics and Applied Mathematics, University of Johannesburg, South Africa; National Institute for Theoretical and Computational Sciences (NITheCS), Johannesburg, South Africa.}
\email{agoswami@uj.ac.za} 

\author{Zurab Janelidze}
\address{Mathematics Division, Department of Mathematical Sciences, Stellenbosch
University, Private Bag X1 Matieland, 7602, South Africa; National Institute for Theoretical and Computational Sciences (NITheCS), Stellenbosch, South Africa.}
\email{zurab@sun.ac.za}

\subjclass{20J15, 18E13, 20E15, 18D30, 06A15, 08C05}

\keywords{Butterfly lemma, exact category, Jordan-H\"older theorem, noetherian form, projection, semi-abelian category, Schreier refinement theorem, subfactor, subquotient, Zassenhaus lemma}

\date{}

\maketitle

\begin{abstract} In this paper, we revisit the 1979 work of Isbell on subfactors of groups and their projections, which he uses to establish a stronger formulation of the butterfly lemma and its consequence, the refinement theorem for subnormal series of subgroups. We point out an error in the second part of Isbell's refinement theorem, but show that the rest of his results can be extended to the general self-dual context of a noetherian form, which includes in its scope all semi-abelian categories as well as all Grandis exact categories. Furthermore, we show that  Isbell's formulations of the butterfly lemma and the refinement theorem amount to canonicity of isomorphisms established in these results.
\end{abstract}

\section{Introduction}

Following Isbell \cite{Isb}, by a \emph{subfactor} of a group $G$ we mean a pair $(X,X^{+})$ of subgroups of $G$ such that $X \lhd X^{+}$ (i.e., $X$ is a normal subgroup of $X^{+}$). The \emph{projection} of a subfactor $(Y,Y^+)$ in a subfactor $(X,X^{+})$ is defined as the pair $((Y\cap X^+)X, (Y^+\cap  X^+)X)$. In \cite{Isb}, Isbell proves the following result on projections of subfactors. 

\begin{lemma}
    The projection of a subfactor $(Y,Y^+)$ in a subfactor $(X,X^+)$ is a subfactor $(M,M^+)$, and $(M,M^+)$ and $(X,X^+)$ have the same projection in $(Y,Y^+).$
\end{lemma}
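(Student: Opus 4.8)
The plan is to work entirely inside the subgroup lattice of $G$ and to exploit the two normalities $X\lhd X^+$ and $Y\lhd Y^+$ together with the Dedekind modular law. Write $M=(Y\cap X^+)X$ and $M^+=(Y^+\cap X^+)X$. First I would record that both products are subgroups: since $Y\cap X^+$ and $Y^+\cap X^+$ are subgroups of $X^+$ and $X\lhd X^+$, the sets $M$ and $M^+$ coincide with $X(Y\cap X^+)$ and $X(Y^+\cap X^+)$, and a product of a subgroup with a normal subgroup is a subgroup. The inclusion $M\subseteq M^+$ is immediate from $Y\subseteq Y^+$, which gives $Y\cap X^+\subseteq Y^+\cap X^+$.

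The first genuine point is the normality $M\lhd M^+$, which I would reduce to a statement about the quotient $\pi\colon X^+\to X^+/X$. Because $Y\lhd Y^+$, a one-line conjugation check shows $Y\cap X^+\lhd Y^+\cap X^+$: conjugating an element of $Y\cap X^+$ by one of $Y^+\cap X^+$ lands in $Y$ by normality, and in $X^+$ since $X^+$ is a subgroup. Applying $\pi$, whose kernel is $X$, we get $\pi(M)=\pi(Y\cap X^+)\lhd\pi(Y^+\cap X^+)=\pi(M^+)$, the normality being preserved because $\pi$ restricted to $Y^+\cap X^+$ is a homomorphism onto $\pi(M^+)$. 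Since $M$ and $M^+$ both contain $\ker\pi=X$, they are the full preimages $\pi^{-1}\pi(M)$ and $\pi^{-1}\pi(M^+)$, so the correspondence theorem lifts the normality back to $M\lhd M^+$. This shows the projection is a subfactor $(M,M^+)$.

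For the second assertion I would compute the projection of $(M,M^+)$ in $(Y,Y^+)$, namely $\big((M\cap Y^+)Y,\,(M^+\cap Y^+)Y\big)$, and match it termwise with the projection $\big((X\cap Y^+)Y,\,(X^+\cap Y^+)Y\big)$ of $(X,X^+)$. The engine here is the modular law in the form ``$C\leq A$ implies $A\cap CX=C(A\cap X)$'', valid whenever $CX$ is a subgroup. Taking $A=Y^+$ and $C=Y^+\cap X^+\subseteq Y^+$ gives $M^+\cap Y^+=(Y^+\cap X^+)(X\cap Y^+)=Y^+\cap X^+$, the last step because $X\cap Y^+\subseteq X^+\cap Y^+$; hence $(M^+\cap Y^+)Y=(X^+\cap Y^+)Y$. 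Taking instead $C=Y\cap X^+\subseteq Y^+$ gives $M\cap Y^+=(Y\cap X^+)(X\cap Y^+)$, and since $Y\cap X^+\subseteq Y$ this collapses, after multiplying by $Y$, to $(M\cap Y^+)Y=(X\cap Y^+)Y$. The two components agree, so $(M,M^+)$ and $(X,X^+)$ have the same projection in $(Y,Y^+)$.

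I expect the main obstacle to be the normality step: unlike the second part, which is a clean pair of modular-law simplifications, proving $M\lhd M^+$ requires genuinely combining the two independent normalities through the quotient by $X$, and one must be careful that every product written down is actually a subgroup before invoking the correspondence theorem or the modular law. A secondary subtlety worth flagging is that the whole argument is phrased order-theoretically, in terms of lattice operations plus normal containment, which is exactly what one would want in anticipation of a self-dual reformulation in a noetherian form; the element-level conjugation checks are the only genuinely group-specific ingredients, and are the parts that would need categorical replacements.
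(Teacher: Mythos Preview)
Your argument is correct and tracks the paper's own proof (Lemma~\ref{proj}) closely: the equality of projections is established via the modular law, and the normality of $M$ in $M^+$ is obtained from $Y\cap X^+\lhd Y^+\cap X^+$ together with $X\lhd X^+$. Two small tactical differences are worth noting. First, for the upper term the paper avoids the modular law entirely: it gets $((YX)Y)^+\subseteq (XY)^+$ from monotonicity of projection (since $(YX)^+\subseteq X^+$) and the reverse inclusion from the trivial bound $(Y^+\cap X^+)\vee X^-\supseteq Y^+\cap X^+$; only the lower term actually needs a modular identity. Second, for the normality step you go through the quotient $X^+/X$ and the correspondence theorem, whereas the paper packages the same mechanism as two standing lemmas (meets and certain joins preserve relative normality, Lemmas~\ref{meet nor} and~\ref{join nor}); your closing remark that the conjugation checks are the only group-specific ingredients needing categorical replacement is exactly on target, and those two lemmas are precisely the replacements the paper supplies.
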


\noindent As shown in \cite{Isb}, this lemma has an application in obtaining a stronger formulation of the butterfly lemma (also known as the Zassenhaus lemma) and the related (Schreier) refinement theorem for subnormal series of subgroups (see \cite{zassenhaus} and the references there for the more classical formulations of these results). One part of the claimed strength though is false as we show in this paper with a simple counterexample. The main goal of this paper is to show that the rest of Isbell's work extends to a very general `functorial' context: that of a noetherian form introduced and studied in \cite{dnaiv, Nie17, Nie19a, Nie19b, Day, Day1, Jan}. The core of such extension rests in finding a formal argument that could replace Isbell's element-based proof of the lemma above. It turns out that the auxiliary results obtained in \cite{dnaiv} are sufficient for this purpose: the crux of the argument rests on an easy adaptation of the `restricted modular law' established in \cite{dnaiv}. We also point out a link between the notion of a projection of a subfactor and an application of the universal isomorphism theorem from \cite{dnaiv} (Theorem~\ref{ThmA}), which in the case of the usual noetherian form for groups obtains the following formulation: given two subfactors $(X,X^+)$ and $(Y,Y^+)$ of a group $G$, the projection of $(X,X^+)$ in $(Y,Y^+)$ is $(Y,Y^+)$ (i.e., $(X,X^+)$ projects \emph{onto} $(Y,Y^+)$, in the terminology of Isbell) and vice versa, if and only if the relational composite of the canonical homomorphisms in the following zigzag is an isomorphism of groups:
$$\xymatrix{X^+/X & X^+\ar[l]\ar[r] & G & Y^+\ar[l]\ar[r] & Y^+/Y.}$$
This allows us to get a more direct proof of an isomorphism in the butterfly lemma than the one contained in \cite{Isb}; it also allows us to see that Isbell's stronger formulations of the butterfly lemma and the refinement theorem essentially reduce to canonicity of isomorphisms established in these results. 

Knowing that Isbell's results can be extended to an arbitrary noetherian form means that they are applicable to all semi-abelian categories \cite{semi abelian} (and their duals), as well as all Grandis exact categories \cite{Gra}, which in the pointed case become categories studied in \cite{Pup62,Mit65}; and recall that the overlap of the classes of semi-abelian and Grandis exact categories is the class of abelian categories. In particular, the following categories have noetherian forms. 
\begin{itemize}
\item Categories of the following group-like structures (because they are semi-abelian): modules over a ring, (non-abelian) groups, non-unitary rings, algebras over rings, Lie algebras, Boolean algebras, Heyting semilattices, crossed modules, compact Hausdorff groups, cocommutative Hopf algebras over a field, and categories of many other group-like structures.

\item The categories of distributive and modular lattices with modular connections as morphisms (a modular connection is a Galois connection satisfying the formulas given in Axiom~\ref{AxA} in this paper), because these categories are Grandis exact.

\item The category of sets with partial functions as morphisms, which is equivalent to the category of pointed sets, because the duals of these categories are semi-abelian. More generally, the categories of pointed objects in toposes (whose duals are also semi-abelian). 
\end{itemize}
Noetherian forms also include categories that are not necessarily (co-)semi-abelian or Grandis exact. According to \cite{Nie}, for instance, they include all algebraic categories. Every topos also possesses a noetherian form, as shown in \cite{Jan}. Another example is given by the category of unitary rings. In many of these examples, however, the refinement theorem trivializes since a subnormal series there is always of length at most~$1$.

A noetherian form over a category $\mathbb{C}$ is a faithful amnestic functor $F$ whose codomain is $\mathbb{C}$, satisfying the axioms presented in \cite{dnaiv}. The fibres of this functor are viewed as (abstract) subobject posets. We follow the convention in \cite{dnaiv} and call objects of $\mathbb{C}$ \emph{groups}, morphisms in $\mathbb{C}$ \emph{homomorphisms}, and objects in the fibre of an object (group) $G$, the \emph{subgroups} of $G$. This hints to how the noetherian form context specializes to the usual group theory. In particular, the functor $F$ in this case is the bifibration of subgroups $F\colon \mathbf{Grp}_2\to \mathbf{Grp}$. As noted above, noetherian forms also have numerous other examples (see also \cite{dnaiv, Nie, Jan}); the reason for having terminology akin to the context of group theory is that this context is the motivating example: as in the present paper, the results in the context of a noetherian form are usually obtained by abstracting theorems of group theory (the use of the group-theoretic language makes it easier to compare the abstracted results with their original counterparts). 

In this paper we rely on the groundwork material pertaining to noetherian forms as laid down in \cite{dnaiv}. For reader's convenience, however, we nevertheless include below, in Section~\ref{SecA}, a brief refresher on basic terminology, notation, axioms and results in the context of a noetherian form. We include in the same section some new observations that can be easily deduced from the results contained in \cite{dnaiv} and which help with the rest of the paper. In Section~\ref{SecB}, we extend Isbell's work from \cite{Isb} to the context of a noetherian form. In Section~\ref{SecC}, we provide a counterexample to one of the claims in the refinement theorem formulated in \cite{Isb}. In Section~\ref{SecD}, we briefly discuss some similarities and differences with other abstract approaches to the butterfly lemma and the refinement theorem in the literature.

\section{Preliminaries}\label{SecA}

By a \emph{form} we mean a faithful amnestic functor, as in \cite{dnai} (see also \cite{Jan14}). Recall that for a functor $F$ and an object $G$ in the codomain of $F$, the fibre $F^{-1}(G)$ is the subcategory of the domain of $F$ consisting of all morphisms that map by $F$ to $\mathsf{id}_G$, the identity morphism of $G$. When $F$ is faithful, each $F^{-1}(G)$ is a preorder; in this case, for $F$ to be  amnestic means that the preorder relation is antisymmetric.

A \emph{noetherian form} is a form satisfying the axioms stated below. With the terminology recalled in the Introduction, in the context of a noetherian form $F$, the subgroups of a group $G$ form a poset --- the fibre $F^{-1}(G)$, which we write as $\mathsf{Sub} (G)$. For any homomorphism $f\colon G\to H$, we write $X\subseteq_f Y$ where $X\in \mathsf{Sub} (G)$ and $Y\in \mathsf{Sub} (H)$, when there exists a morphism $f'\colon X\to Y$ such that $F(f')=f$. In the case when $f=\mathsf{id}_G$, this is the partial order relation of $\mathsf{Sub} (G)$, which we denote simply by `$\subseteq$' (drop the subscript). Note that the transitivity law of this partial order is a special case of a more general property, which arises from the fact that $F$ preserves composition:
$$W\subseteq_{g} X\subseteq_{f}Y\quad\Rightarrow\quad W\subseteq_{fg} Y.$$

In the context of the usual group theory, $F$ is the bifibration of subgroups. Write $\mathbf{Grp}_2$ for the category of pairs $(G,S)$ where $G$ is a group and $S$ is a subgroup of $G$. A morphism $f\colon (G,X)\to (H,Y)$ is a group homomorphism $f\colon G\to H$ such that for every $x\in X$ we have $f(x)\in Y$. Then $F$ is the codomain functor $(G,X)\mapsto G$. Note that the relation $X\subseteq_f Y$ is then the relation `for every $x\in X$ we have $f(x)\in Y$'. It can be equivalently reformulated using the direct image map as $f(X)\subseteq Y$, or using the inverse image map as $X\subseteq f^{-1}(Y)$. In the following axiom the direct image and the inverse image mappings are introduced abstractly. The axiom below is in fact equivalent to requiring that the form $F$ is a Grothendieck bifibration (an obvious fact for anyone familiar with the latter notion).

\begin{axiom}\label{AxD}
    For each homomorphism $f\colon G \rightarrow H$ there are maps \begin{center}
	$\xymatrix {\mathsf{Sub} (G) \ar @/^/[r]^f  &\mathsf{Sub} (H) \ar@/^/[l]^{f^{-1}} }$
\end{center}
such that the equivalences
$$f(X)\subseteq Y\quad\Leftrightarrow\quad X\subseteq_f Y\quad\Leftrightarrow\quad X\subseteq f^{-1}(Y)$$
hold for all $X\in\mathsf{Sub} (G)$ and $Y\in\mathsf{Sub} (H)$.
Furthermore, for two composable homomorphisms $f\colon G \to H$ and $g\colon E \to G$, and subgroups $W$ of $E$ and $Y$ of $H$, we have $(fg)(W)=f(g(W))$ (and $(fg)^{-1} (Y)=g^{-1}(f^{-1}(Y))$, which can be derived). Lastly, for any group $G$ and subgroup $X$ of $G$, we have $\mathsf{id}_G(X)=X$ (and $\mathsf{id}_G^{-1}(X)=X$, which can be derived).
\end{axiom}

The axiom above implies that the two maps $f(-),f^{-1}(-)$ form a monotone Galois connection (i.e., an adjunction) between $\mathsf{Sub} (G)$ and $\mathsf{Sub} (H)$, where the \emph{direct image map} $f(-)$ is the left adjoint and the \emph{inverse image map} $f^{-1}(-)$ is a right adjoint. Furthermore, it is easy to check that this Galois connection is in fact uniquely determined by $f$, thanks to the equivalences required in the axiom above. The usual and well-known properties of Galois connections (or more generally, of adjunctions) are sufficient to derive the conclusions claimed in brackets in the axiom above. The same usual properties yield that the direct image maps preserve joins (when they exist) and the inverse image maps preserve meets (when they exist).

For a group $G$, we denote the bottom and top elements of $\mathsf{Sub} (G)$, if they exist, by $0_G$ and $1_G$ respectively (the empty join and the empty meet), where the subscripts are dropped when it is clear which group we are referring to. Consider a homomorphism $f\colon G\to H$. The \emph{kernel} of $f$, denoted $\Ker f$, is defined as $f^{-1}(0_H)$ (the inverse image of the smallest subgroup of $H$). Similarly, the \emph{image} of $f$, denoted $\im f$, is defined as $f(1_G)$ (the direct image of the largest subgroup). These terms obtain their usual meaning in usual group theory. The axiom above guarantees that similarly as in usual group theory, the kernel of an isomorphism is the least subgroup and the image is the largest subgroup.

We say that a subgroup is \emph{conormal} if it is the image of some homomorphism and we say that a subgroup is \emph{normal} if it is the kernel of some homomorphism. Note that in the concrete case of the usual groups, every subgroup is conormal. We do not want to require this in the abstract case, since we want the axioms of a noetherian form to be self-dual relative to the `functorial duality' of the intrinsic language of a noetherian form: the duality arising by replacing a noetherian form $F\colon\mathbb{X}\to \mathbb{C}$ with the dual functor $F^\mathsf{op}\colon \mathbb{X}^\mathsf{op}\to \mathbb{C}^\mathsf{op}$.
See Table~\ref{FigA} for the resulting duality scheme: it shows the effect of duality on primitive expressions in the internal language of a form.

\begin{table}        
\begin{tabular}{|c|c|}
\hline 
\textbf{Expression} & \textbf{Dual Expression} \\
\hline
$G\in\mathbb{C}$ & $G\in\mathbb{C}$\\
\hline
$X\in F^{-1}(G)$ & $X\in F^{-1}(G)$ \\
\hline
$f\colon G\rightarrow H$ in $\mathbb{C}$ &$f\colon H\rightarrow G$ in $\mathbb{C}$ \\
\hline
$X\subseteq_f Y$ & $Y\subseteq_f X$ \\
\hline
$gf$ & $fg$\\
\hline 
\end{tabular}
\centering
\caption{Scheme for functorial duality of the internal language of a form $F\colon\mathbb{X}\to\mathbb{C}$}
\label{FigA}
\end{table}

\begin{axiom}\label{AxA} For any group $G$, the subgroup poset $\mathsf{Sub} (G)$ is a bounded lattice. Moreover, for any homomorphism $f\colon G \rightarrow H$ and subgroups $X$ of $G$ and $Y$ of $H$, we have $$f^{-1}f(X)=X\vee \Ker f \text{ and } ff^{-1}(Y)=Y\wedge \im f.$$
\end{axiom} 

The second half of the axiom above can be reformulated as an abstraction of the classical lattice isomorphism theorem stated below, where by an \emph{interval} $[X,Y]$ of subgroups of a group $G$ we mean the subposet of $\mathsf{Sub}(G)$ given by all $Z\in \mathsf{Sub}(G)$ such that $X\subseteq Z\subseteq Y$. As a convention, when we say that $[X,Y]$ is an interval, we assume that $X\subseteq Y$.

\begin{theorem}[Lattice Isomorphism Theorem]\label{LIT}
For any homomorphism $f\colon G\to H$, the direct and inverse image maps along $f$ result in an isomorphism of the intervals $[\mathsf{Ker}f,1_G]$ and $[0_H,\mathsf{Im}f]$, which are sublattices of $\mathsf{Sub}(G)$ and $\mathsf{Sub}(H)$ (i.e., they are closed under binary meet and join), respectively. 
\end{theorem}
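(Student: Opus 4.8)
The plan is to exhibit the restricted direct and inverse image maps as mutually inverse, order-preserving bijections between the two intervals, and then to observe that the sublattice assertion is essentially automatic. First I would record what Axioms~\ref{AxD} and~\ref{AxA} already hand us: by Axiom~\ref{AxD} the pair $f(-)\dashv f^{-1}(-)$ is a monotone Galois connection, so both maps are order-preserving, while $\Ker f=f^{-1}(0_H)$ and $\im f=f(1_G)$ hold by definition of kernel and image. These are the only structural inputs needed.

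Next I would check that the two maps carry the indicated intervals into one another. For $X\in[\Ker f,1_G]$, monotonicity of $f(-)$ together with $X\subseteq 1_G$ gives $f(X)\subseteq f(1_G)=\im f$, and of course $0_H\subseteq f(X)$ since $0_H$ is the bottom; hence $f(X)\in[0_H,\im f]$. Dually, for $Y\in[0_H,\im f]$ monotonicity of $f^{-1}(-)$ together with $0_H\subseteq Y$ gives $\Ker f=f^{-1}(0_H)\subseteq f^{-1}(Y)$, and $f^{-1}(Y)\subseteq 1_G$ automatically; hence $f^{-1}(Y)\in[\Ker f,1_G]$. This is where one must be a little careful to invoke monotonicity in the correct direction for each map.

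The crux is then a direct application of the two identities in Axiom~\ref{AxA}, evaluated on the correct intervals so that the joins and meets collapse. For $X$ with $\Ker f\subseteq X$ we get $f^{-1}f(X)=X\vee\Ker f=X$, and for $Y$ with $Y\subseteq\im f$ we get $ff^{-1}(Y)=Y\wedge\im f=Y$. Thus on the respective intervals the two restricted maps compose to the identity in both orders, so they are mutually inverse bijections; being order-preserving, they form a poset isomorphism $[\Ker f,1_G]\cong[0_H,\im f]$. I expect this collapsing of $X\vee\Ker f$ to $X$ and of $Y\wedge\im f$ to $Y$ to be the only step with any content, and it is precisely the reason the domains are cut down to these intervals.

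Finally I would dispatch the sublattice claim, which I do not anticipate as an obstacle. In any lattice an interval $[a,b]$ is closed under binary meet and join, since $a\subseteq x,y\subseteq b$ forces $a\subseteq x\wedge y\subseteq x\subseteq b$ and $a\subseteq x\subseteq x\vee y\subseteq b$; and Axiom~\ref{AxA} guarantees that $\mathsf{Sub}(G)$ and $\mathsf{Sub}(H)$ are lattices, so both intervals are sublattices. Since a poset isomorphism between lattices automatically preserves whatever meets and joins exist, the bijection constructed above is an isomorphism of lattices, completing the argument.
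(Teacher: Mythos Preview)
Your argument is correct. The paper does not actually supply a proof of this theorem: it introduces the statement as a reformulation of the second half of Axiom~\ref{AxA} and moves on. Your write-up is precisely the routine unpacking of that reformulation --- restricting the Galois connection to the intervals and using the two identities $f^{-1}f(X)=X\vee\Ker f$ and $ff^{-1}(Y)=Y\wedge\im f$ to see that the restricted maps are mutually inverse --- so it is in full agreement with what the paper intends.
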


See \cite{Jan14} for various other equivalent reformulations of the axiom above and links with closely related axioms considered in universal and categorical algebra. The simplest reformulation, which only makes use of the poset structure of lattices, is given by requiring that the set $\{f(X)\mid X\in\mathsf{Sub}(G)\}$ is down-closed and the set $\{f^{-1}(Y)\mid Y\in\mathsf{Sub}(H)\}$ is up-closed. In \cite{Jan14}, following the terminology of Grandis \cite{Gra, Gra2}, we may call monotone maps $f$ between posets that admit a right adjoint $f^{-1}$ such that the two maps have these properties \emph{modular} maps. A useful fact about modular maps is that their composite is also modular. In \cite{Gra}, a Galois connection between modular lattices is called a modular connection when its left adjoint is modular in our sense.   

The second of the last two equalities in Axiom~\ref{AxA} is a weaker form of what, in the context of `hyperdoctrines', Lawvere coined \emph{Frobenius reciprocity} in \cite{Law70}:
\begin{equation}\label{EquB}f(f^{-1}(Y)\wedge X)=Y\wedge f(X).\end{equation}
To get the former equality, simply put $X=1$ in the latter equality. Note that the dual of this stronger property,
$$f^{-1}(f(X)\vee Y)=X\vee f^{-1}(Y),$$
fails in group theory as it implies modularity of the subgroup lattices. Indeed, by setting $f$ to be an inclusion of a subgroup $Z$ containing $X$, in which case $f(X)=X$ and $f^{-1}(H)=H\wedge Z$, the equality above becomes precisely the modular law:
\begin{equation}\label{EquA}
X\subseteq Z\quad\Rightarrow\quad (X\vee Y)\wedge Z=X\vee(Y\wedge Z).
\end{equation}
As it is well known, the dihedral group of order four is the smallest group whose subgroup lattice is not modular. 

The axioms above do imply the dual of Frobenius reciprocity in the case when $Y$ is normal, i.e., $Y=\mathsf{Ker}g$ for some homomorphism $g$:
$$f^{-1}(f(X)\vee \mathsf{Ker}g)=f^{-1}g^{-1}gf(X)=(gf)^{-1}(gf)(X)=X\vee \mathsf{Ker}gf=X\vee f^{-1}(\mathsf{Ker}(g)).$$
Note that the case when $Y=0$ is a special case of this, since $0$ is normal. So the second half of the axiom above can be equivalently reformulated to require that Frobenius reciprocity holds when $X$ is conormal together with the dual of this property.

The axioms above are sufficient to establish the following lemma in our abstract context, where the two halves of the lemma are acutally dual to each other.

\begin{theorem}[Restricted Modular Law, \cite{dnaiv}]\label{RML}
 For any three subgroups, $X$, $Y$, and $Z$ of a group $G$, if $Y$ is normal and $Z$ is conormal, then the implication (\ref{EquA}) holds. The same implication holds also when $Y$ is conormal and $X$ is normal.
\end{theorem}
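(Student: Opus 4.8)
The plan is to prove the first implication directly by a short Galois-connection computation and then obtain the second by functorial duality. Throughout I will use only three facts from the preliminaries: that direct image maps preserve joins (Axiom~\ref{AxD} together with the resulting adjunction and the fact that $\mathsf{Sub}(G)$ is a bounded lattice), the identity $mm^{-1}(Y')=Y'\wedge\im m$ from Axiom~\ref{AxA}, and the \emph{dual Frobenius reciprocity} $m^{-1}(m(W)\vee Y)=W\vee m^{-1}(Y)$ derived above, which holds whenever $Y$ is a normal subgroup of the codomain of $m$. Note that the computation will produce the desired equality in one stroke, so there is no need to treat the two inclusions separately.

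For the first implication, assume $Y$ is normal, $Z$ is conormal, and $X\subseteq Z$. Since $Z$ is conormal, choose a homomorphism $m\colon M\to G$ with $\im m=Z$. The idea is that everything of interest sits inside $Z=\im m$, hence can be pulled back along $m$, manipulated in $\mathsf{Sub}(M)$, and pushed forward again. Because $X\subseteq Z=\im m$, Axiom~\ref{AxA} gives $mm^{-1}(X)=X\wedge\im m=X$, so, writing $W:=m^{-1}(X)$, we have $m(W)=X$. Now
$$(X\vee Y)\wedge Z=(X\vee Y)\wedge\im m=mm^{-1}(X\vee Y)=mm^{-1}(m(W)\vee Y),$$
where the middle equality is Axiom~\ref{AxA}. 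Applying dual Frobenius reciprocity inside $m^{-1}$ — legitimate precisely because $Y$ is normal in $G$ — gives $m^{-1}(m(W)\vee Y)=W\vee m^{-1}(Y)$, and since direct image preserves joins,
$$m\big(W\vee m^{-1}(Y)\big)=m(W)\vee mm^{-1}(Y)=X\vee(Y\wedge\im m)=X\vee(Y\wedge Z).$$
Chaining these equalities yields $(X\vee Y)\wedge Z=X\vee(Y\wedge Z)$, which is the implication~(\ref{EquA}).

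The second implication follows by functorial duality. Applying the scheme of Table~\ref{FigA} to the first implication interchanges normal with conormal and $\vee$ with $\wedge$, and reverses $\subseteq$; after relabelling the two endpoints $X,Z$ of the interval, the result is an implication of exactly the shape~(\ref{EquA}), but now under the hypotheses that $Y$ is conormal and $X$ is normal. Since the axioms of a noetherian form are self-dual, this dual statement is again a theorem, and it is precisely the second half of the lemma.

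I expect the only genuine subtlety to be the bookkeeping of the two one-sided hypotheses: dual Frobenius reciprocity is available only for normal $Y$, and the covering morphism $m$ with $\im m=Z$ exists only because $Z$ is conormal, so the computation above is calibrated to consume both hypotheses and nothing stronger. The other point requiring care is checking that the duality of Table~\ref{FigA} really turns the first hypothesis-pair into the second (rather than into some inequivalent condition); but this reduces to the mechanical substitution and relabelling described above.
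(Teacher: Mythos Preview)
Your proof is correct. The paper does not actually prove Theorem~\ref{RML} itself (it is cited from \cite{dnaiv}), but it does remark that ``the two halves of the lemma are actually dual to each other'' and derives the dual Frobenius identity $f^{-1}(f(X)\vee Y)=X\vee f^{-1}(Y)$ for normal $Y$ immediately beforehand; your argument uses exactly these two ingredients in the way the surrounding text suggests, so your approach matches the intended one.
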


In the case of a concrete group $G$ and subgroups $X$ and $Y$, when either $X$ or $Y$ is normal, the join of $X$ and $Y$ is given by
$$X\vee Y=XY=\{xy\mid [x\in X ]\& [y\in Y]\}.$$
The implication above, where $X$ is normal, then becomes
\begin{equation}
\label{EquC}        X\subseteq Z \quad\Rightarrow\quad X(Y\cap Z)=XY\cap Z.
\end{equation}
This implication holds even when $X$ and $Y$ are merely subsets of $G$. So, in particular, it holds when $X$ and $Y$ are subgroups of $G$ with $X$ not necessarily being a normal subgroup of $G$. In our abstract context, we cannot speak about subsets of groups unless they are subgroups, and neither can we consider the product of two subsets; the second half of the restricted modular law is then the closest we can get to (\ref{EquC}). 

Consider a subgroup $X$ of a group $G$. An \emph{embedding} associated to $X$ is a homomorphism $\iota_X \colon X \to G$, which may or may not exist, defined by a universal property. Note that here the group $X$, i.e., the domain of $\iota_X$, is not the same thing as the subgroup $X$ (they are in fact objects in different categories), even though we write them the same way. In usual group theory, both $X$'s will be the same group. The universal property states: $\im \iota_X \subseteq X$ and for any homomorphism $f\colon H \to G$ with $\im f \subseteq X$, there exists a unique homomorphism $f' \colon H \to X $ which satisfies $f=\iota_X f'$. It is easy to see that the identity homomorphism $\mathsf{id}_G\colon G\to G$ is an embedding associated to the largest subgroup $1_G$ of a group $G$. It is then in line with the convention above to write $G$ for the largest subgroup $1_G$ of $G$. 

Dually, a \emph{projection} associated to subgroup $X$ of a group $G$ is a homomorphism $\pi_X \colon G \to G/X$ such that $X \subseteq \Ker \pi_X $ and for any homomorphism $h\colon G \to E$ with $X\subseteq \Ker h$, there exists a unique homomorphism $h' \colon G/X \to E$ which satisfies $h=h' \pi_X$. We will refer to the projections as \emph{projection homomorphisms} since the term `projection' will also have a second meaning in this paper (see the next section). We will use arrows of the form $$\xymatrix{\ar@{^(->}[r]& &\text{and}& \ar@{->>}[r]&}$$ to denote embeddings and projection homomorphisms respectively.

In usual group theory, an embedding associated to a subgroup is given by its realization as a group equipped with the subgroup inclusion homomorphism, while a projection homomorphism associated to a subgroup is given by the quotient map corresponding to the smallest normal subgroup containing the given one.

\begin{axiom}\label{AxF} Any homomorphism $f\colon X \rightarrow Y$ factorizes as $f=\iota_{\im f} h \pi_{\Ker f}$ where $h$ is an isomorphism (this statement subsumes the statement of existence of $\iota_{\im f}$ and $\pi_{\Ker f}$). 
\end{axiom}

Note that the axiom above implies that for any conormal subgroup, an embedding associated to it exists, and dually, for any normal subgroup, a projection associated to it exists.

\begin{axiom}\label{AxB}
The join of two normal subgroups is normal and the meet of two conormal subgroups is conormal.
\end{axiom}

In the usual group theory, projection homomorphisms are the same as surjective homomorphisms of groups, while embeddings are the same as injective group homomorphisms. Here, as well as in our abstract context, these classes of morphisms contain isomorphisms and are closed under composition. In fact, they form a proper factorization system. Moreover, as we know from \cite{dnaiv}, projection homomorphisms are precisely those homomorphisms $f$ such that the direct image map $f(-)$ is surjective. By general properties of Galois connections, surjectivity of $f(-)$ is equivalent to injectivity of the inverse image map $f^{-1}(-)$ and is further equivalent to $f(-)$ being a left inverse of $f^{-1}(-)$. Dually, embeddings are precisely those homomorphisms $f$ such that the direct image map $f(-)$ is injective, or equivalently, the inverse image map $f^{-1}(-)$ is surjective, or yet equivalently, $f(-)$ is a right inverse of $f^{-1}(-)$. Another way to characterize embeddings and projection homomorphisms is as follows: embeddings are precisely those homomorphisms $f\colon G\to H$ for which $\mathsf{Ker}f=0_G$; dually, projection homomorphisms are precisely those $f\colon G\to H$ for which $\mathsf{Im}f=H$.

Another useful property of embeddings and projection homomorphisms that was noted in \cite{dnaiv} is that any embedding is an embedding associated to its image, and moreover, if an embedding is associated with a conormal subgroup, then its image equals that conormal subgroup. Dually, any projection homomorphism is a projection homomorphism associated with its kernel and if it is associated with a normal subgroup, then the kernel equals that normal subgroup.

As shown in \cite{dnaiv}, the last axiom is actually equivalent to requiring that the inverse image of a conormal subgroup along an embedding is conormal and the direct image of a normal subgroup along a projection homomorphism is normal.

By a \emph{zigzag} we mean a diagram $$\xymatrix{G_0 \ar@{-}[r]^-{f_1} & G_1 \ar@{-}[r]^-{f_2}&G_2\ar@{-}[r]^-{f_3} &G_3 \ar@{-}[r]^-{f_4}&G_4\ar@{.}[r]&G_{n-1}\ar@{-}[r]^-{f_n}&G_n }$$
of groups and homomorphisms having possibly alternating directions (hence the omission of the arrowheads in the diagram). We say that a subgroup $Y$ of $G_n$ is obtained by \emph{chasing} a subgroup $X$ of $G_0$ forward along the zigzag if $$Y=f_1^* f_2^* f_3^* \dots f_n^*(X)$$ where $f_i^* =f_i$ if the arrowhead of $f_i $ appears on the right and $f_i^*=f_i^{-1} $ if the arrowhead of $X_i$ appears on the left. We say that a subgroup $X$ of $G_0$ is obtained by chasing a subgroup $Y$ of $G_n$ backward along the zigzag if $X$ if obtained by chasing $Y$ forward along the zigzag:
$$\xymatrix{G_n \ar@{-}[r]^{f_n} & G_{n-1} \ar@{.}[r]&G_4\ar@{-}[r]^{f_4} &G_3 \ar@{-}[r]^{f_3}&G_2\ar@{-}[r]^{f_2}&G_1\ar@{-}[r]^{f_1}&G_0. }$$ In usual group theory, we can view each homomorphism in the zigzag as a relation directed from left to right: if $f_i$ is directed right, then it is $f_i$ regarded as a relation, while $f_i$ is directed left, then we can consider the opposite relation of $f_i$. Then, it may or may not be that the composite of these relations is a homomorphism. When it is, we refer to this homomorphism as one `induced' by the zigzag. In \cite{dnaiv}, the concept of an induced homomorphism has been abstracted to the context of an arbitrary noetherian form and the following result has been established.

\begin{theorem}[Homomorphism Induction Theorem, \cite{dnaiv}] \label{HIT}
For a zigzag to induce a homomorphism it is necessary and sufficient that chasing the least subgroup of the initial node forward along the zigzag results in the least subgroup of the final node, and chasing the largest subgroup of the final node backward along the zigzag results in the largest subgroup of the initial node. Moreover, when a zigzag induces a homomorphism, the induced homomorphism is unique and the direct and inverse image maps of the induced homomorphism are given by chasing a subgroup forward and backward, respectively, along the zigzag.
\end{theorem}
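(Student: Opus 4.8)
The plan is to reduce everything to the two monotone maps determined by the zigzag: the chase-forward map $\phi\colon\mathsf{Sub}(G_0)\to\mathsf{Sub}(G_n)$ and the chase-backward map $\psi\colon\mathsf{Sub}(G_n)\to\mathsf{Sub}(G_0)$, each a composite of direct- and inverse-image maps. Since an induced homomorphism $f$ (in the sense recalled from \cite{dnaiv}) has $\phi=f(-)$ and $\psi=f^{-1}(-)$, necessity is immediate: $f(-)$ is a left adjoint, hence preserves the empty join, so chasing $0_{G_0}$ forward gives $f(0_{G_0})=0_{G_n}$; dually $f^{-1}(-)$ is a right adjoint, preserves the empty meet, so chasing $1_{G_n}$ backward gives $f^{-1}(1_{G_n})=1_{G_0}$. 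The whole content of the theorem is therefore the sufficiency together with the uniqueness clause.

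For sufficiency I would first put the zigzag in a normal form. Adjacent edges pointing the same way may be merged, since direct-image maps compose and inverse-image maps compose (Axiom~\ref{AxD}); and by the factorization of Axiom~\ref{AxF} each remaining edge splits as a projection homomorphism followed by an embedding. After these moves the zigzag is an alternating chain of embeddings and projection homomorphisms, while $\phi$ and $\psi$ are unchanged. The proof then proceeds by induction on the length of this chain, the aim at each step being to \emph{absorb} an edge into a genuine homomorphism until a single arrow $G_0\to G_n$ remains, which will be the induced homomorphism.

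The substantive case is an opposite-direction junction, i.e.\ a span with legs $p\colon P\to G_0$ and $q\colon P\to G_n$ (or the dual cospan), for which $\phi(X)=q(p^{-1}(X))$ and $\psi(Y)=p(q^{-1}(Y))$. Here the two endpoint conditions translate into containments that fire the universal properties: chasing $0_{G_0}$ forward to $0_{G_n}$ forces $\Ker p\subseteq\Ker q$, while chasing $1_{G_n}$ backward to $1_{G_0}$ forces $\im p=1_{G_0}$, so that $p$ is a projection homomorphism. The universal property of $\pi_{\Ker p}$ then yields a unique $f$ with $q=fp$, and a short computation with the identities $pp^{-1}(Y)=Y\wedge\im p$ and $p^{-1}p(X)=X\vee\Ker p$ of Axiom~\ref{AxA} gives $f(-)=\phi$ and $f^{-1}(-)=\psi$. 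When such a junction is spliced onto the rest of the chain, commuting the resulting direct and inverse images past the joins and meets already present is exactly where the Restricted Modular Law (Theorem~\ref{RML}) is needed, its hypotheses being met because $\Ker p$ is normal and $\im p$ is conormal; the dual cospan is handled by the embedding half of Axiom~\ref{AxF} together with the dual of Theorem~\ref{RML}.

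The step I expect to be the main obstacle is that the two endpoint conditions are \emph{global}: they need not hold for any proper sub-zigzag, so the induction cannot simply invoke the theorem on a shorter chain. The remedy is to carry, as an additional invariant, a description of how chasing $0$ forward and $1$ backward behaves through the partially absorbed chain, together with the fact that the relevant chased subgroups stay normal (respectively conormal) --- using that inverse images preserve normality, direct images preserve conormality, and Axiom~\ref{AxB} governs the joins and meets that appear --- so that the hypotheses of Theorem~\ref{RML} remain available at the next junction. Once a single homomorphism $f$ is produced with $f(-)=\phi$ and $f^{-1}(-)=\psi$, uniqueness (and the identification of the image maps in the ``moreover'' clause) follows from $F$ being faithful and amnestic together with the defining property of an induced homomorphism recalled from \cite{dnaiv}.
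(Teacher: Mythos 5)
First, a point of comparison: this paper does not prove Theorem~\ref{HIT} at all --- it is quoted from \cite{dnaiv} and used as a black box --- so there is no in-paper argument to measure your proposal against; it has to stand on its own. Judged that way, your outline assembles the right ingredients. The necessity direction via adjointness is fine (modulo the caveat below), the normalization of the zigzag is harmless since it leaves the chase maps $\phi$ and $\psi$ unchanged, and the two-edge span case is correct and complete: for $p\colon P\to G_0$ and $q\colon P\to G_n$, the condition $\psi(1_{G_n})=1_{G_0}$ gives $p(q^{-1}(1_{G_n}))=p(1_P)=\im p=1_{G_0}$, so $p$ is a projection homomorphism, while $\phi(0_{G_0})=0_{G_n}$ gives $q(\Ker p)\subseteq 0_{G_n}$, i.e.\ $\Ker p\subseteq\Ker q$; the universal property of $p=\pi_{\Ker p}$ then yields a unique $f$ with $q=fp$, and the identities of Axiom~\ref{AxA} identify $f(-)$ with $\phi$ and $f^{-1}(-)$ with $\psi$.

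The genuine gap is exactly the one you flag and then leave unresolved: for a chain of length greater than two the two endpoint conditions are global, and the ``additional invariant'' you propose to carry through the induction is never specified, so the induction step is not actually performed. This is not routine bookkeeping --- it is where essentially all of the content of the theorem lives. To absorb a junction in the middle of the chain you need the local analogues of $\Ker p\subseteq\Ker q$ and $\im p=1$ at that junction, and these do not follow from the two endpoint equalities without a propagation argument showing that the relevant chased subgroups remain normal (respectively conormal) and that a span-followed-by-cospan configuration can be rewritten or collapsed; this is precisely where the Restricted Modular Law (Theorem~\ref{RML}) and Axiom~\ref{AxB} have to be deployed in a controlled way, and merely naming them does not discharge the obligation. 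A secondary concern: in the necessity direction you take as given that an induced homomorphism satisfies $f(-)=\phi$ and $f^{-1}(-)=\psi$, but that identification is itself part of the ``moreover'' clause of the statement; unless the definition of ``induces'' from \cite{dnaiv} that you are invoking builds this in, you are assuming a piece of the conclusion. As it stands the proposal is a credible roadmap, not a proof.
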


We can chase intervals along zigzags by chasing its least and largest elements. The Universal Isomorphism Theorem from \cite{dnaiv} can then be reformulated in the following way (as in \cite{dnaiv}, it is a corollary of the Homomorphism Induction Theorem).

\begin{corollary}[Universal Isomorphism Theorem]\label{UIT}
The interval $[0,1]$ in each of the two ends of the zigzag chases to the interval $[0,1]$ in the other end if and only if the zigzag induces an isomorphism.
\end{corollary}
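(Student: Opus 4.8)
The plan is to derive this directly from the Homomorphism Induction Theorem (Theorem~\ref{HIT}) together with the characterizations of embeddings and projection homomorphisms recalled above. Label the four chasing conditions as follows: chasing $0$ at $G_0$ forward lands on $0$ at $G_n$ (condition~(A)); chasing $1$ at $G_n$ backward lands on $1$ at $G_0$ (condition~(B)); chasing $1$ at $G_0$ forward lands on $1$ at $G_n$ (condition~(C)); and chasing $0$ at $G_n$ backward lands on $0$ at $G_0$ (condition~(D)). Since chasing an interval means chasing its least and largest elements, the hypothesis of the corollary --- that $[0,1]$ at each end chases to $[0,1]$ at the other end --- is exactly the conjunction of (A), (B), (C), (D).

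First I would treat the direction $(\Leftarrow)$ where the four chasing conditions hold. Conditions (A) and (B) are precisely the necessary and sufficient criterion of Theorem~\ref{HIT} for the zigzag to induce a (forward) homomorphism $\phi$, and that theorem further tells us that the direct image map of $\phi$ is chasing forward while its inverse image map is chasing backward. Condition (D) then reads $\Ker\phi=\phi^{-1}(0_{G_n})=0_{G_0}$, so $\phi$ is an embedding by the recalled characterization of embeddings as the homomorphisms with trivial kernel; condition (C) reads $\im\phi=\phi(1_{G_0})=1_{G_n}$, so $\phi$ is a projection homomorphism by the dual characterization. A homomorphism that is at once an embedding and a projection homomorphism is an isomorphism, since these two classes form a factorization system whose intersection is the isomorphisms; this can also be read off Axiom~\ref{AxF}, as the factorization $\phi=\iota_{\im\phi}\,h\,\pi_{\Ker\phi}$ has both outer factors equal to isomorphisms once $\Ker\phi=0$ and $\im\phi=1$. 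Hence the zigzag induces an isomorphism.

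For the converse $(\Rightarrow)$, suppose the zigzag induces an isomorphism $\phi$. By Theorem~\ref{HIT} its direct and inverse image maps are again chasing forward and chasing backward, respectively. Since direct image maps preserve joins and inverse image maps preserve meets, applying this to the empty join and empty meet yields $\phi(0_{G_0})=0_{G_n}$ and $\phi^{-1}(1_{G_n})=1_{G_0}$, which are exactly (A) and (B). Because $\phi$ is an isomorphism, its kernel is the least subgroup and its image is the largest, so $\Ker\phi=0_{G_0}$ and $\im\phi=1_{G_n}$ unwind to (D) and (C); alternatively one may invoke the Lattice Isomorphism Theorem (Theorem~\ref{LIT}) for $\phi$, whose kernel and image are $0$ and $1$, to see that $\phi(-)$ and $\phi^{-1}(-)$ restrict to mutually inverse lattice isomorphisms of the full subgroup lattices. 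Either way all four chasing conditions hold, i.e.\ $[0,1]$ at each end chases to $[0,1]$ at the other.

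The only genuinely substantive step is the passage, in the first direction, from the two HIT conditions (A), (B) to the upgrade of $\phi$ to an isomorphism via (C), (D); everything else is bookkeeping. The main obstacle I anticipate is being careful about which endpoint is chased in which direction --- matching ``$[0,1]$ chases forward to $[0,1]$'' with the image-based (projection) and kernel-based (embedding) conditions on $\phi$ rather than with their duals --- so that the correct characterization is applied to the correct map. Once the conditions are paired correctly, the corollary is an immediate consequence of Theorem~\ref{HIT} and the factorization-system description of isomorphisms.
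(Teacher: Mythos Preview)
Your argument is correct and follows exactly the route the paper indicates: the corollary is stated there without a detailed proof, merely as a consequence of the Homomorphism Induction Theorem (Theorem~\ref{HIT}), and you have unpacked that derivation accurately, using (A)+(B) to obtain the induced homomorphism and then (C)+(D) together with the kernel/image characterizations of embeddings and projection homomorphisms (and Axiom~\ref{AxF}) to upgrade it to an isomorphism. The converse direction is likewise fine; note that (A) and (B) in fact follow already from the ``necessary'' half of Theorem~\ref{HIT} once you know the zigzag induces a homomorphism, so your Galois-connection justification, while correct, is not needed there.
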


Given subgroups $X$ and $Y$ of a group $G$, we say that $X$ is \textit{normal} to $Y$ and we write $X \lhd Y$ if $X\subseteq Y$, $Y$ is conormal and $\iota_Y^{-1}(X)$ is normal. In usual group theory, the relative normality relation is of course the usual relation of one subgroup being normal inside another. 
Every pair $(X,Y)$ such that $X\lhd Y$ gives rise to a zigzag,
\begin{equation}
\label{EquH}
\xymatrix@!=40pt{G &Y \ar@{_(->}[l]_-{\iota_{Y}} \ar@{->>}[r]^-{\pi_{\iota_{Y}^{-1}(X)}} &Y/X.}\end{equation}
The following lemma will be useful.

\begin{lemma}\label{LemA}
For two subgroups $X \lhd Y$ of a group $G$, chasing the interval $[0,1]=[0_{Y/X},Y/X]$ in $\mathsf{Sub}(Y/X)$ backward along the zigzag (\ref{EquH}) results in the interval $[X,Y]$. Furthermore, chasing subgroups backward along the zigzag (\ref{EquH}) gives an isomorphism between the lattice $\mathsf{Sub}(Y/X)$ and the sublattice $[X,Y]$ of $G$, whose inverse is given by chasing a subgroup $Z\in[X,Y]$ forward along the same zigzag. Finally, for any subgroup $Z$ of $G$, chasing $Z$ forward and then backward along the zigzag (\ref{EquH}) results in the subgroup $(Z\wedge Y)\vee X$.      
\end{lemma}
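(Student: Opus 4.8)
The plan is to first make the two chasing operations along the zigzag (\ref{EquH}) fully explicit, and then treat the three assertions in turn. Reading the arrowheads off (\ref{EquH}), chasing a subgroup $W$ of $Y/X$ backward means applying $\pi^{-1}$ and then $\iota_Y$, so the backward chase is the map $W\mapsto \iota_Y(\pi^{-1}(W))$; dually, chasing a subgroup $Z$ of $G$ forward means applying $\iota_Y^{-1}$ and then $\pi$, so the forward chase is $Z\mapsto \pi(\iota_Y^{-1}(Z))$ (here $\pi$ abbreviates $\pi_{\iota_Y^{-1}(X)}$). Before doing anything else I would record two identities used throughout: since $\iota_Y$ is the embedding associated to the conormal subgroup $Y$, its image is $\im\iota_Y=Y$; and since $\pi$ is the projection homomorphism associated to the normal subgroup $\iota_Y^{-1}(X)$ of $Y$, its kernel is $\Ker\pi=\iota_Y^{-1}(X)$.

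For the first assertion I would simply chase the two endpoints of $[0,1]$ backward. For $0_{Y/X}$ I get $\iota_Y(\pi^{-1}(0))=\iota_Y(\Ker\pi)=\iota_Y(\iota_Y^{-1}(X))$, which by Axiom~\ref{AxA} equals $X\wedge\im\iota_Y=X\wedge Y=X$, using $X\subseteq Y$. For $1_{Y/X}$ I get $\iota_Y(\pi^{-1}(1_{Y/X}))=\iota_Y(1_Y)=\im\iota_Y=Y$. Hence $[0,1]$ chases backward to $[X,Y]$.

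For the second assertion the idea is to realize the backward chase as a composite of two lattice isomorphisms supplied by Theorem~\ref{LIT}. Applying that theorem to the embedding $\iota_Y$ (so $\Ker\iota_Y=0$ and $\im\iota_Y=Y$) yields mutually inverse lattice isomorphisms $\iota_Y\colon\mathsf{Sub}(Y)\to[0,Y]$ and $\iota_Y^{-1}\colon[0,Y]\to\mathsf{Sub}(Y)$; applying it to the projection homomorphism $\pi$ (so $\im\pi=Y/X$ and $\Ker\pi=\iota_Y^{-1}(X)$) yields mutually inverse lattice isomorphisms $\pi^{-1}\colon\mathsf{Sub}(Y/X)\to[\iota_Y^{-1}(X),1_Y]$ and $\pi\colon[\iota_Y^{-1}(X),1_Y]\to\mathsf{Sub}(Y/X)$. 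Restricting the first isomorphism to the subinterval $[\iota_Y^{-1}(X),1_Y]$, which it carries onto $[\iota_Y(\iota_Y^{-1}(X)),\iota_Y(1_Y)]=[X,Y]$ by the endpoint computations above, and composing with the second, I conclude that the backward chase $\iota_Y\circ\pi^{-1}$ is a lattice isomorphism $\mathsf{Sub}(Y/X)\to[X,Y]$ whose inverse is $\pi\circ\iota_Y^{-1}$, that is, the forward chase.

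For the last assertion I would compute the forward-then-backward chase of an arbitrary $Z$ directly. By Axiom~\ref{AxA}, $\pi^{-1}\pi(\iota_Y^{-1}(Z))=\iota_Y^{-1}(Z)\vee\Ker\pi=\iota_Y^{-1}(Z)\vee\iota_Y^{-1}(X)$. Applying $\iota_Y$ and using that direct image maps preserve joins, this becomes $\iota_Y(\iota_Y^{-1}(Z))\vee\iota_Y(\iota_Y^{-1}(X))=(Z\wedge Y)\vee(X\wedge Y)=(Z\wedge Y)\vee X$, where the middle step is again Axiom~\ref{AxA} and the last uses $X\subseteq Y$. The only real care needed is the bookkeeping: reading the chasing directions off the zigzag correctly, and, in the second assertion, tracking exactly which intervals the two instances of Theorem~\ref{LIT} restrict to so that the composite is defined and is precisely the backward chase with the forward chase as inverse. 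The one pitfall I would flag is the temptation to distribute an inverse image map over the join in the last step; since inverse images need not preserve joins, I route the computation through the direct image, which does.
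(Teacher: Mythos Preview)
Your proof is correct. The ingredients you use --- Axiom~\ref{AxA} for $ff^{-1}$ and $f^{-1}f$, the join-preservation of direct images, and the identification of $\im\iota_Y$ and $\Ker\pi$ --- are exactly those used in the paper, and your computation for the third assertion is verbatim the paper's.

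The organization differs slightly. The paper proves the third assertion first (the forward-then-backward formula $(Z\wedge Y)\vee X$), then observes that since $\iota_Y$ is an embedding and $\pi$ is a projection homomorphism, the backward-then-forward chase is the identity on $\mathsf{Sub}(Y/X)$; together these give a bijection from $\mathsf{Sub}(Y/X)$ onto the set of $Z\in\mathsf{Sub}(G)$ fixed by $Z\mapsto (Z\wedge Y)\vee X$, which is readily seen to be $[X,Y]$. You instead treat the three assertions in the stated order and obtain the isomorphism by composing two explicit instances of Theorem~\ref{LIT}. Your route is slightly more modular (each assertion stands on its own), while the paper's is marginally shorter because the first two assertions fall out of the third together with the embedding/projection characterizations. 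Either way the argument is essentially the same.
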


\begin{proof}
The effect of chasing $Z$ forward and then backward along the zigzag (\ref{EquH}) is:
$$\iota_Y\pi_{\iota_{Y}^{-1}(X)}^{-1}\pi_{\iota_{Y}^{-1}(X)}\iota_Y^{-1}(Z)=\iota_Y(\iota_Y^{-1}(Z)\vee \iota_Y^{-1}(X))=\iota_Y\iota_Y^{-1}(Z)\vee\iota_Y\iota_Y^{-1}(X)=(Z\wedge Y)\vee X.$$
Since $\iota_Y$ is an embedding and $\pi_{\iota^{-1}_Y(X)}$ is a projection, chasing a subgroup backward along the zigzag (\ref{EquH}) and then forward gives back the same subgroup. So we get an isomorphism between $\mathsf{Sub}(Y/X)$ and the subposet of $\mathsf{Sub}(G)$ given by those $Z\in \mathsf{Sub}(G)$ such that $Z=(Z\wedge Y)\vee X$. It is easy to see that the sublattice $[X,Y]$ of $\mathsf{Sub}(G)$ is in fact such subposet of $\mathsf{Sub}(G)$.   
\end{proof}

Isbell's `subfactors' \cite{Isb} are precisely the pairs $(X,X^+)$ such that $X\lhd X^+$, while the construction $(Z\wedge X^+)\vee X$ is what he calls the `projection' of $Z$ in the subfactor $(X,X^+)$ (not to be confused with our `projection homomorphism'). In light of the lemma above, we can view the projection of $Z$ in a subfactor $(X,X^+)$ as a canonical way of transporting $Z$ to the interval $[X,X^+]$. We will write Isbell's subfactors as intervals $X=[X^-,X^+]$.

The following two lemmas from \cite{dnaiv} abstract important properties of relative normality from usual group theory. They will be used, among other results discussed above, in generalizing Isbell's analysis of subfactors to the context of a noetherian form. 

\begin{lemma}[\cite{dnaiv}]\label{meet nor}
    Let $W,X$ and $Y$ be subgroups of a group $G$ such that $W\lhd X$ and $Y$ is conormal. Then $W\wedge Y \lhd X\wedge Y$. In particular, for $W\subseteq Y \subseteq X$, we obtain $W\lhd Y$.
\end{lemma}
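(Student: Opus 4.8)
The plan is to verify, straight from the definition of the relative normality relation, the three conditions that together constitute $W\wedge Y\lhd X\wedge Y$: that $W\wedge Y\subseteq X\wedge Y$, that $X\wedge Y$ is conormal, and that $\iota_{X\wedge Y}^{-1}(W\wedge Y)$ is normal. The first two are quick formalities, while the third carries all the content and is where I would concentrate the argument.

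First I would dispatch the two easy conditions. Monotonicity of binary meet together with $W\subseteq X$ (supplied by $W\lhd X$) gives $W\wedge Y\subseteq X\wedge Y$. For conormality I would invoke Axiom~\ref{AxB}: since $W\lhd X$ includes conormality of $X$, and $Y$ is conormal by hypothesis, the meet $X\wedge Y$ of two conormal subgroups is conormal. This also guarantees, by the remark following Axiom~\ref{AxF}, that the embedding $\iota_{X\wedge Y}$ exists and that $\im\iota_{X\wedge Y}=X\wedge Y$.

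For the crux I would first simplify the subgroup whose normality is at stake. Since $\im\iota_{X\wedge Y}=X\wedge Y\subseteq Y$, the Galois adjunction of Axiom~\ref{AxD} forces $\iota_{X\wedge Y}^{-1}(Y)=1$; as inverse image maps preserve meets, this yields $\iota_{X\wedge Y}^{-1}(W\wedge Y)=\iota_{X\wedge Y}^{-1}(W)\wedge 1=\iota_{X\wedge Y}^{-1}(W)$, so it suffices to prove $\iota_{X\wedge Y}^{-1}(W)$ normal. Here I would use that $X\wedge Y\subseteq X$, whence the universal property of $\iota_X$ factors the embedding as $\iota_{X\wedge Y}=\iota_X\, j$ for a unique homomorphism $j$, giving $\iota_{X\wedge Y}^{-1}(W)=j^{-1}\iota_X^{-1}(W)$. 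Now $\iota_X^{-1}(W)$ is normal --- this is precisely the content of $W\lhd X$ --- so it is a kernel, say $\iota_X^{-1}(W)=\Ker g$, and the inverse image of a kernel along any homomorphism is again a kernel, since $j^{-1}(\Ker g)=j^{-1}g^{-1}(0_{})=(gj)^{-1}(0_{})=\Ker(gj)$. Hence $\iota_{X\wedge Y}^{-1}(W)$ is normal, which settles the third condition. The ``in particular'' clause then follows by specialization: when $W\subseteq Y\subseteq X$ one has $W\wedge Y=W$ and $X\wedge Y=Y$, so the conclusion reads $W\lhd Y$, with $Y$ conormal by hypothesis.

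I expect the only genuine obstacle to be the normality of $\iota_{X\wedge Y}^{-1}(W\wedge Y)$; what resolves it is the factorization of $\iota_{X\wedge Y}$ through $\iota_X$ combined with the elementary but decisive observation that inverse images of kernels are kernels. Everything else is formal bookkeeping with the Galois connection and the axioms, and I would take care only to keep the two roles of the symbol $X$ --- the subgroup and the domain group of $\iota_X$ --- clearly separated when writing the factorization.
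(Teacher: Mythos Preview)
Your argument is correct. Note, however, that the paper does not supply its own proof of this lemma: it is quoted from \cite{dnaiv} and used as a black box. Your verification proceeds directly from the definition of $\lhd$ and the stated axioms, and each step checks out --- the factorization $\iota_{X\wedge Y}=\iota_X\,j$ via the universal property of $\iota_X$, together with the fact that inverse images of kernels are kernels, is exactly the right mechanism for the normality condition. Since there is no in-paper proof to compare against, there is nothing further to contrast.
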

\begin{lemma}[\cite{dnaiv}]\label{join nor}
   Let $W,X$ and $Y$ be subgroups of a group $G$ such that $W\lhd X$ and $Y \lhd X \vee Y$. Then $W\vee Y \lhd X\vee Y$. In particular, for  $Y\subseteq X$, the assumptions become $W\lhd X$ and $Y\lhd X$ and they yield $W\vee Y \lhd X.$
\end{lemma}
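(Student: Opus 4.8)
The plan is to transport both hypotheses and the conclusion into the conormal subgroup $Z:=X\vee Y$ and its quotient by $Y$, reducing relative normality to ordinary normality and then invoking the stability of normal subgroups under direct images along projection homomorphisms (the reformulation of Axiom~\ref{AxB} recalled above) and under inverse images. First I would dispose of the easy parts of the conclusion $W\vee Y\lhd X\vee Y$: the containment $W\vee Y\subseteq X\vee Y$ is automatic since $W\subseteq X$ and $Y\subseteq X\vee Y$, while $Z=X\vee Y$ is conormal precisely because this is part of the hypothesis $Y\lhd Z$. Writing $\iota:=\iota_Z$ for the associated embedding and $\bar W:=\iota^{-1}(W)$, $\bar X:=\iota^{-1}(X)$, $\bar Y:=\iota^{-1}(Y)$, the remaining content of $W\vee Y\lhd Z$ is that $\iota^{-1}(W\vee Y)$ be normal in the group $Z$. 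Since $\iota$ is an embedding associated to the conormal subgroup $Z$, it has trivial kernel and full image, so by the Lattice Isomorphism Theorem~\ref{LIT} the map $\iota^{-1}$ restricts to a lattice isomorphism $[0_G,Z]\cong\mathsf{Sub}(Z)$; in particular $\iota^{-1}(W\vee Y)=\bar W\vee\bar Y$ and $\bar X\vee\bar Y=\iota^{-1}(Z)=1_Z$.

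Next I would bring in the projection homomorphism. By the hypothesis $Y\lhd Z$ the subgroup $\bar Y$ is normal in $Z$, so there is an associated projection $\pi:=\pi_{\bar Y}\colon Z\to Z/Y$ with $\Ker\pi=\bar Y$; together with $\iota$ this is exactly the zigzag~(\ref{EquH}) attached to $Y\lhd Z$, and Lemma~\ref{LemA} governs the chasing along it. From Axiom~\ref{AxA} one has $\pi^{-1}\pi(\bar W)=\bar W\vee\Ker\pi=\bar W\vee\bar Y$, so the subgroup I must show normal equals $\pi^{-1}(\pi(\bar W))$. Because the inverse image of a kernel is again a kernel (if $\pi(\bar W)=\Ker h$ then $\pi^{-1}(\pi(\bar W))=\Ker(h\pi)$), it suffices to prove that $\pi(\bar W)$ is normal in $Z/Y$.

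To obtain this last normality I would transfer the normality of $\iota_X^{-1}(W)$ in $X$ across a projection onto $Z/Y$. As $X$ is conormal with $\im\iota_X=X\subseteq Z=\im\iota$, the universal property of $\iota$ factors $\iota_X=\iota\, j$ through a homomorphism $j\colon X\to Z$, which is again an embedding (one computes $\Ker j=\Ker(\iota j)=\Ker\iota_X=0_X$) and satisfies $\im j=\iota^{-1}(X)=\bar X$. The composite $\pi j\colon X\to Z/Y$ then has image $\pi(\bar X)=\pi(\bar X\vee\bar Y)=\pi(1_Z)=1_{Z/Y}$ (using $\pi(\bar Y)=0$ and $\bar X\vee\bar Y=1_Z$), so $\pi j$ is a projection homomorphism. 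Now $\iota_X^{-1}(W)=j^{-1}(\bar W)$ is normal in $X$ by $W\lhd X$, and pushing it forward along the projection $\pi j$ preserves normality (Axiom~\ref{AxB}); since $(\pi j)\bigl(j^{-1}(\bar W)\bigr)=\pi(\bar W\wedge\bar X)=\pi(\bar W)$ (as $\bar W\subseteq\bar X$), this is exactly the required normality of $\pi(\bar W)$. The ``in particular'' clause is the special case $Y\subseteq X$, where $Z=X$ and the hypotheses read $W\lhd X$ and $Y\lhd X$.

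I expect the transfer in the third paragraph to be the main obstacle: it is where the internal normality living in the group $X$ must be moved first into $Z$ and then into $Z/Y$, and it relies on three things working together --- the factorization $\iota_X=\iota\, j$ through the embedding $\iota$ (with $j$ again an embedding onto $\bar X$), the characterization of projection homomorphisms as the homomorphisms with full image (to recognize $\pi j$ as a projection), and the preservation of normal subgroups under direct images along projections. By contrast, the rewriting of $\iota^{-1}(W\vee Y)$ as $\pi^{-1}(\pi(\bar W))$ is a routine consequence of Axioms~\ref{AxD} and~\ref{AxA} together with the Lattice Isomorphism Theorem.
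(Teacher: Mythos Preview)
The paper does not supply a proof of this lemma; it is quoted as a result from \cite{dnaiv}. Your argument is correct and uses only tools already set up in the present paper: you reduce to showing $\iota_Z^{-1}(W\vee Y)=\bar W\vee\bar Y$ is normal in $Z=X\vee Y$, rewrite this via Axiom~\ref{AxA} as $\pi^{-1}\pi(\bar W)$, and then obtain normality of $\pi(\bar W)$ by pushing the normal subgroup $\iota_X^{-1}(W)=j^{-1}(\bar W)$ along the projection homomorphism $\pi j$. Each step checks out: the factorization $\iota_X=\iota j$ comes from the universal property of $\iota_Z$ since $\im\iota_X=X\subseteq Z$; $j$ is an embedding with $\im j=\bar X$; the computation $\im(\pi j)=\pi(\bar X)=\pi(\bar X\vee\bar Y)=\pi(1_Z)=1_{Z/Y}$ shows $\pi j$ is a projection; and the preservation of normality under direct images along projections is exactly the reformulation of Axiom~\ref{AxB} stated in Section~\ref{SecA}. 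The reduction of the ``in particular'' clause is also correct.
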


The Restricted Modular Law (Theorem~\ref{RML}) has the following consequence.

\begin{corollary}[Less Restricted Modular Law]\label{RML2}
For any three subgroups, $X$, $Y$, and $Z$ of a group $G$, if for some subgroup $S$ of $G$ we have $Y\lhd S$, $Z\subseteq S$ and $Z$ is conormal, then the implication (\ref{EquA}) holds. The same implication holds when $Y$ is conormal, $Y\subseteq S$, $Z\subseteq S$ and $X\lhd S$ for some subgroup $S$ of $G$.
\end{corollary}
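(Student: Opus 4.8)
The plan is to reduce both halves to the Restricted Modular Law (Theorem~\ref{RML}) by transporting the whole configuration into the group realizing the conormal subgroup $S$, where relative normality becomes genuine normality. Notice first that in both cases the hypotheses force $S$ to be conormal: directly in the first half, and through $X\lhd S$ in the second. Moreover every subgroup involved lies in the interval $[0,S]$, since $X\subseteq Z\subseteq S$, while $Y\subseteq S$ holds by $Y\lhd S$ in the first case and by hypothesis in the second.

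First I would invoke the embedding $\iota_S\colon S\to G$ associated to the conormal subgroup $S$, which exists by Axiom~\ref{AxF} and satisfies $\Ker\iota_S=0$ and $\im\iota_S=S$. By the Lattice Isomorphism Theorem (Theorem~\ref{LIT}), the inverse image map $\iota_S^{-1}(-)$ restricts to a lattice isomorphism between the interval $[0,S]$ of $\mathsf{Sub}(G)$ and the whole of $\mathsf{Sub}(S)$, with inverse given by the direct image map. Since this interval is a sublattice of $\mathsf{Sub}(G)$, the meets and joins of $X$, $Y$, $Z$ computed in $\mathsf{Sub}(G)$ agree with those computed inside $[0,S]$, so the isomorphism faithfully transports the two sides of~(\ref{EquA}) between $\mathsf{Sub}(G)$ and $\mathsf{Sub}(S)$.

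Next I would translate the hypotheses into $\mathsf{Sub}(S)$. Writing $\bar X=\iota_S^{-1}(X)$, $\bar Y=\iota_S^{-1}(Y)$ and $\bar Z=\iota_S^{-1}(Z)$, monotonicity of the inverse image map gives $\bar X\subseteq\bar Z$ from $X\subseteq Z$. In the first half, $\bar Y$ is normal in $S$ by the very definition of $Y\lhd S$, and $\bar Z$ is conormal because the inverse image of a conormal subgroup along an embedding is conormal (the consequence of Axiom~\ref{AxB} recorded above); the first part of Theorem~\ref{RML} then yields $(\bar X\vee\bar Y)\wedge\bar Z=\bar X\vee(\bar Y\wedge\bar Z)$ in $\mathsf{Sub}(S)$. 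In the second half, symmetrically, $\bar X$ is normal in $S$ by $X\lhd S$ and $\bar Y$ is conormal by the same inverse-image fact, so the second part of Theorem~\ref{RML} applies. Applying the direct image map, which is the inverse of the isomorphism, then transports the established equality back to $\mathsf{Sub}(G)$, giving precisely~(\ref{EquA}).

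The only point requiring genuine care is the compatibility of the lattice operations under the restriction to $[0,S]$: one must know that $[0,S]$ is truly a sublattice of $\mathsf{Sub}(G)$, so that the isomorphism commutes with the joins and meets appearing in~(\ref{EquA}) rather than merely preserving the order. This is exactly what the Lattice Isomorphism Theorem supplies, and everything else is routine bookkeeping with the definition of $\lhd$ and the behaviour of images along embeddings.
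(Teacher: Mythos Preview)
Your proposal is correct and follows essentially the same route as the paper: transport the configuration into $\mathsf{Sub}(S)$ via the embedding $\iota_S$, apply Theorem~\ref{RML} there (using that $\iota_S^{-1}$ turns relative normality into normality and preserves conormality by Axiom~\ref{AxB}), and transport back via Theorem~\ref{LIT}. The paper's proof merely writes out the resulting chain of equalities explicitly, whereas you describe the same steps in prose; there is no substantive difference.
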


\begin{proof}
Suppose $X\subseteq Z$. Under each set of assumptions, we have $X,Y,Z\subseteq S$. We then get:
\begin{align*}
\iota_S\iota_S^{-1}((X\vee Y)\wedge Z) &=\iota_S(\iota_S^{-1}(X\vee Y)\wedge \iota_S^{-1}(Z))\\
&=\iota_S((\iota_S^{-1}(X)\vee \iota_S^{-1}(Y))\wedge \iota_S^{-1}(Z)) & \textrm{(Theorem~\ref{LIT})}\\
&=\iota_S(\iota_S^{-1}(X)\vee (\iota_S^{-1}(Y)\wedge \iota_S^{-1}(Z))) & \textrm{(Theorem~\ref{RML})}\\
&=\iota_S(\iota_S^{-1}(X)\vee \iota_S^{-1}(Y\wedge Z))\\
&=\iota_S(\iota_S^{-1}(X))\vee \iota_S(\iota_S^{-1}(Y\wedge Z))\\
&=X\vee (Y\wedge Z). & \qedhere
\end{align*}
\end{proof}
 
\section{Isbell's results on subfactors}\label{SecB}

In what follows we work in $\mathsf{Sub} (G)$, for a fixed $G$. An interval $X=[X^-,X^+]$ is a \emph{subfactor} (of $G$) when $X^-\lhd X^+$. The \emph{projection} of a subgroup $Z$ in an interval $X=[X^-,X^+]$ is defined as
$$ZX=(Z\wedge X^+)\vee X^-.$$
Notice that $ZX\in X$. Projection is monotone (in the first argument): if $Z_1\subseteq Z_2$ then $Z_1X\subseteq Z_2X$.
This yields that the projection of an interval $Y=[Y^-,Y^+]$ in an interval $X=[X^-,X^+]$,
$$YX=[Y^-X,Y^+X]=[(Y^-\wedge X^+)\vee X^-,(Y^+\wedge X^+)\vee X^-],$$
is again an interval (i.e., $Y^-X\subseteq Y^+X$).  
Notice that $YX\subseteq X$. 
We say that $Y$ projects \emph{onto} $X$ when $YX=X$.

We are now ready to abstract Lemma~1 from \cite{Isb}, of which the butterfly lemma in \cite{Isb} is a corollary.

\begin{lemma}[Subfactor Projection Lemma]\label{proj} Given two intervals $X$ and $Y$, we have
$$(YX)Y=XY$$
provided $X$ is a subfactor. Moreover, $YX$ is a subfactor provided $X$ is a subfactor and $(YX)^+=Y^+X$ is conormal.
\end{lemma}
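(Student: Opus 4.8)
The plan is to prove the two assertions separately, translating everything into the interval language and using the Less Restricted Modular Law (Corollary~\ref{RML2}) and the normality-transport lemmas (Lemmas~\ref{meet nor} and~\ref{join nor}) as the main engines. Recall that for intervals $X=[X^-,X^+]$ and $Y=[Y^-,Y^+]$, the projection $YX$ has endpoints $(YX)^-=(Y^-\wedge X^+)\vee X^-$ and $(YX)^+=(Y^+\wedge X^+)\vee X^-$. To prove $(YX)Y=XY$, I would compute both sides endpoint-by-endpoint. The left-hand side is the projection of the interval $YX$ into $Y$, so its bottom endpoint is $((YX)^-\wedge Y^+)\vee Y^-$ and top endpoint is $((YX)^+\wedge Y^+)\vee Y^-$; the right-hand side $XY$ has endpoints $(X^-\wedge Y^+)\vee Y^-$ and $(X^+\wedge Y^+)\vee Y^-$. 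So the whole first claim reduces to showing, for each of the two endpoint pairs, an identity of the shape
$$\bigl(((W\wedge X^+)\vee X^-)\wedge Y^+\bigr)\vee Y^-=(W\wedge Y^+)\vee Y^-$$
with $W=Y^-$ (bottom) and $W=Y^+$ (top).

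First I would expand the inner term $((W\wedge X^+)\vee X^-)\wedge Y^+$. Here is where the Less Restricted Modular Law enters: I want to pull the $\wedge Y^+$ inside the join over $X^-$, i.e.\ rewrite $(A\vee X^-)\wedge Y^+$ as $(A\wedge Y^+)\vee (X^-\wedge Y^+)$ or a comparable reassociation, and this is exactly the kind of modular manipulation that fails in general but holds under the normality/conormality hypotheses supplied by $X^-\lhd X^+$. The hypothesis that $X$ is a subfactor gives $X^-\lhd X^+$, hence relative normality data to feed into Corollary~\ref{RML2} (with the ambient subgroup $S$ taken to be $X^+$, or whichever of the endpoints makes the containments $X,Y,Z\subseteq S$ hold). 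After applying the modular law, I expect the $X$-terms ($X^-$, $X^+$) to cancel against each other modulo the outer $\vee Y^-$ and $\wedge Y^+$, collapsing both sides to $(W\wedge Y^+)\vee Y^-$. The symmetry $(YX)Y=XY$ versus $(XY)X=YX$ should not be invoked — only $X$ is assumed to be a subfactor — so I must be careful that every modular step uses only the normality of $X^-$ inside $X^+$ and the conormality of whichever subgroups I slice by, never normality of $Y^-$ inside $Y^+$.

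For the second assertion I must show $YX=[(YX)^-,(YX)^+]$ is itself a subfactor, i.e.\ $(YX)^-\lhd (YX)^+$, given that $X$ is a subfactor and that $(YX)^+=(Y^+\wedge X^+)\vee X^-$ is conormal (the statement records $(YX)^+=Y^+X$, so I first confirm this notational identity, which is immediate from the definition since $Y^+X=(Y^+\wedge X^+)\vee X^-$). The strategy is to build the relative normality $(Y^-\wedge X^+)\vee X^- \lhd (Y^+\wedge X^+)\vee X^-$ by combining the two transport lemmas. Starting from $X^-\lhd X^+$, Lemma~\ref{meet nor} (meets with a conormal subgroup preserve relative normality) applied with the conormal subgroup $Y^+$ should give $X^-\wedge Y^+\lhd X^+\wedge Y^+$; I will need $Y^+$ conormal, which follows since $Y$ being involved as a subfactor endpoint, or since the containment hypotheses force it — I would check this carefully. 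Then Lemma~\ref{join nor} (joins preserve relative normality under a matching relative-normality side condition) applied by joining $X^-$ on both sides should upgrade this to $(X^-\wedge Y^+)\vee X^-\lhd (X^+\wedge Y^+)\vee X^-$, and since $(X^-\wedge Y^+)\vee X^-=X^-\le$ absorbs (because $X^-\wedge Y^+\subseteq X^-$), the left endpoint simplifies; I must then reconcile this with the actual left endpoint $(Y^-\wedge X^+)\vee X^-$ rather than $(X^-\wedge Y^+)\vee X^-$, so the correct bookkeeping is to run the transport starting from the relative normality of $(YX)^-$ inside $(YX)^+$ directly, using $Y^-\wedge X^+\lhd Y^+\wedge X^+$ if available.

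The main obstacle will be the second part: getting the relative-normality side condition of Lemma~\ref{join nor} to hold for the specific subgroups arising in $(YX)^-$ and $(YX)^+$, and ensuring I only assume the hypotheses actually granted (namely $X$ a subfactor and $(YX)^+$ conormal) rather than the dual assumption that $Y$ is a subfactor. Establishing $Y^-\wedge X^+\lhd Y^+\wedge X^+$ from first principles may not be directly available, so I anticipate the real work is choosing the right intermediate subgroup $S$ and the right order of applying Lemmas~\ref{meet nor} and~\ref{join nor} so that the conormality of $(YX)^+$ is exactly what powers the final join step. The first part, by contrast, I expect to be a fairly mechanical lattice computation once the modular law is correctly invoked.
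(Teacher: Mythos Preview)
Your overall toolkit is right---Corollary~\ref{RML2} for the lattice manipulation and Lemmas~\ref{meet nor}, \ref{join nor} for the normality transport---and matches the paper's. But there is a concrete error in your reduction of the first claim. You assert that both endpoint equalities have the common shape
\[
\bigl(((W\wedge X^+)\vee X^-)\wedge Y^+\bigr)\vee Y^-=(W\wedge Y^+)\vee Y^-
\]
with $W\in\{Y^-,Y^+\}$. They do not: the right-hand side of $(YX)Y=XY$ is $XY$, whose endpoints are $(X^-\wedge Y^+)\vee Y^-$ and $(X^+\wedge Y^+)\vee Y^-$, so the targets involve $X^-$ and $X^+$, not $W=Y^\mp$. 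Substituting $W=Y^+$ into your displayed identity gives right-hand side $Y^+$, which is false in general. So the ``$X$-terms cancel'' intuition is wrong: in the bottom-endpoint identity an $X^-$ survives on the right, and in the top-endpoint identity an $X^+$ survives.

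Once the targets are written correctly, the two endpoints need genuinely different arguments, and the paper treats them asymmetrically. The top endpoint $((YX)Y)^+=(XY)^+$ needs no modular law at all: one inclusion is monotonicity of projection (since $(YX)^+\subseteq X^+$), and the reverse is the trivial $Y^+\wedge X^+\subseteq (Y^+\wedge X^+)\vee X^-$ pushed through $(-\wedge Y^+)\vee Y^-$. The bottom endpoint is where Corollary~\ref{RML2} is actually used, and the key preparatory move---which your outline does not anticipate---is to insert a free $\wedge\, X^+$ (legitimate because $(Y^-\wedge X^+)\vee X^-\subseteq X^+$), so that the modular law is applied with the ``$Z$'' equal to $Y^+\wedge X^+$ inside the ambient $S=X^+$, where $X^-$ is relatively normal. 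Without this insertion the containment hypothesis of the modular implication fails, since $X^-\subseteq Y^+$ is not given.

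For the second assertion, your detour through $X^-\wedge Y^+\lhd X^+\wedge Y^+$ lands on the wrong left endpoint and, as you half-notice, would need $Y^+$ conormal---which you cannot assume, since $Y$ is merely an interval. The paper instead uses the conormality hypothesis on $(YX)^+$ together with $X^-\subseteq(YX)^+\subseteq X^+$ and $X^-\lhd X^+$ to obtain $X^-\lhd(YX)^+$ from the ``in particular'' clause of Lemma~\ref{meet nor}; it then invokes $Y^-\wedge X^+\lhd Y^+\wedge X^+$ from the same lemma and finishes via Lemma~\ref{join nor}. So your instinct that the conormality of $(YX)^+$ ``is exactly what powers the final join step'' is correct, but it enters earlier than you suggest: it supplies the side condition $X^-\lhd(YX)^+$ needed for Lemma~\ref{join nor}, not merely the last join.
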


\begin{proof}
We have $((YX)Y)^+=(YX)^+Y=(Y^+X)Y\subseteq X^+Y=(XY)^+$ by monotonicity of projection. At the same time, 
\begin{align*} ((YX)Y)^+ &=(((Y^+\wedge X^+)\vee X^-)\wedge Y^+)\vee Y^- \\ &\supseteq (Y^+\wedge X^+ \wedge Y^+) \vee Y^-\\ &=(X^+\wedge Y^+)\vee Y^-\\ &=(XY)^+
\end{align*} and so $((YX)Y)^+=(XY)^+$. Furthermore, assuming that $X$ is a subfactor, we have:
\begin{align*}
        ((YX)Y)^- &=(((Y^-\wedge X^+)\vee X^-)\wedge Y^+)\vee Y^-\\
        &=((((Y^-\wedge X^+)\vee X^-)\wedge Y^+) \wedge X^+)\vee Y^-\\
        &=(((Y^-\wedge X^+)\vee X^-)\wedge (Y^+ \wedge X^+))\vee Y^-\\
        &= (Y^-\wedge X^+) \vee (X^-\wedge (Y^+\wedge X^+))\vee Y^- & (\textrm{Corollary}~\ref{RML2}) \\
        &=(Y^-\wedge X^+)\vee (X^-\wedge Y^+)\vee Y^-\\
        &=(X^-\wedge Y^+)\vee Y^-\\
        &=(XY)^-
\end{align*}
    
Now, suppose $X$ is a subfactor and $(YX)^+$ is conormal. By Lemma~\ref{meet nor}, we have $X^-\lhd (YX)^+$ since $(YX)^+$ is conormal, $X^-\subseteq (YX)^+\subseteq X^+$, and $X^-\lhd X^+$. We also have $Y^- \wedge X^+\lhd Y^+ \wedge X^+$ by the same lemma. So by applying Lemma~\ref{join nor}, we obtain $(YX)^-=(Y^-\wedge X^+)\vee X^- \lhd (Y^+ \wedge X^+)\vee X^-=(YX)^+.$
\end{proof}

The following result relates Isbell's concept of projection onto and homomorphism induction in the sense of \cite{dnaiv}. Not only does this result help to obtain an isomorphism of the butterfly lemma quicker than it is done in \cite{Isb}, but it also emphasizes canonicity of such isomorphism.

\begin{theorem}\label{ThmA}
Given subfactors $X$ and $Y$ of a group $G$, consider the zigzag
$$\xymatrix@!=60pt{ X^+/X^- & X^+\ar@{->>}[l]_-{\pi_{\iota_{X^+}^{-1}(X^-)}}\ar@{^(->}[r]^-{\iota_{X^+}} & G & Y^+ \ar@{_(->}[l]_-{\iota_{Y^+}} \ar@{->>}[r]^-{\pi_{\iota_{Y^+}^{-1}(Y^-)}} & Y^+/Y^-.}$$
This zigzag induces an isomorphism if and only if $XY=Y$ and $YX=X$.
\end{theorem}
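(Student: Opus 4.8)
The plan is to invoke the Universal Isomorphism Theorem (Corollary~\ref{UIT}): the displayed zigzag induces an isomorphism if and only if the interval $[0,1]$ at each end chases to the interval $[0,1]$ at the other end. So the whole statement reduces to computing these two chases and showing that the forward chase condition is equivalent to $XY=Y$ and the backward chase condition to $YX=X$. The key structural observation is that the zigzag decomposes at $G$ into two halves, each of which is, up to orientation, an instance of the zigzag (\ref{EquH}) from Lemma~\ref{LemA}: the left half $X^+/X^-\leftarrow X^+\rightarrow G$ is (\ref{EquH}) for the subfactor $X$ read backward, and the right half $G\leftarrow Y^+\rightarrow Y^+/Y^-$ is precisely (\ref{EquH}) for the subfactor $Y$. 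This is exactly where the hypothesis that $X$ and $Y$ are subfactors is used, since Lemma~\ref{LemA} requires it.

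First I would handle the forward chase. Chasing along the left half from $X^+/X^-$ to $G$ is precisely backward chasing along (\ref{EquH}) for $X$, so by Lemma~\ref{LemA} it is the isomorphism $\mathsf{Sub}(X^+/X^-)\to[X^-,X^+]$; in particular the interval $[0,1]$ of $X^+/X^-$ chases to the interval $[X^-,X^+]=X$. Chasing along the right half from $G$ to $Y^+/Y^-$ is forward chasing along (\ref{EquH}) for $Y$; by the last assertion of Lemma~\ref{LemA}, chasing a subgroup $Z$ of $G$ forward and then backward along (\ref{EquH}) for $Y$ returns $(Z\wedge Y^+)\vee Y^-=ZY$, so under the Lemma~\ref{LemA} isomorphism $\mathsf{Sub}(Y^+/Y^-)\to[Y^-,Y^+]$ the forward chase of $Z$ corresponds exactly to the projection $ZY$. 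Composing the two halves, the full forward chase of the interval $[0,1]$ of $X^+/X^-$ corresponds, under the isomorphism for $Y$, to the interval $[X^-Y,X^+Y]=XY$. Since that same isomorphism carries $[0,1]$ of $Y^+/Y^-$ to $[Y^-,Y^+]=Y$, the forward chase condition holds if and only if $XY=Y$.

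Next I would treat the backward chase, which is entirely symmetric. Backward chasing along the right half is the isomorphism $\mathsf{Sub}(Y^+/Y^-)\to[Y^-,Y^+]$ carrying $[0,1]$ to $Y$, and forward chasing along the left half corresponds, under the isomorphism for $X$, to the projection $Z\mapsto ZX=(Z\wedge X^+)\vee X^-$. Hence the backward chase of $[0,1]$ of $Y^+/Y^-$ corresponds to the interval $YX$, and the backward chase condition holds if and only if $YX=X$. Combining the two equivalences with Corollary~\ref{UIT} then yields the result.

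I expect the main obstacle to be the bookkeeping in the middle step: correctly matching the direction of each elementary chase (direct versus inverse image) to the orientation of the arrows, and recognizing the forward chase along one half as the projection operation. The latter is the real content and follows from the ``forward-then-backward'' identity of Lemma~\ref{LemA}: since backward chasing is the isomorphism $\mathsf{Sub}(Y^+/Y^-)\to[Y^-,Y^+]$ and forward-then-backward chasing of $Z$ yields $ZY$, the forward chase of $Z$ must correspond to $ZY$ under that isomorphism (and likewise for $X$). Once this dictionary between chasing and projection is in place, the two equivalences and the appeal to the Universal Isomorphism Theorem are routine.
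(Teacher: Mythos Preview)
Your proposal is correct and follows essentially the same approach as the paper's proof: decompose the zigzag at $G$ into the two halves of type (\ref{EquH}), use Lemma~\ref{LemA} to identify the chase of $[0,1]$ with the projection $XY$ (respectively $YX$), and conclude by the Universal Isomorphism Theorem. The paper compresses all of this into two sentences, but the content and the key ingredients (Lemma~\ref{LemA} for the forward-then-backward identity, Corollary~\ref{UIT} to finish) are exactly the ones you identified.
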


\begin{proof}
By Lemma~\ref{LemA}, $XY=Y$ is equivalent to the interval $[0,1]$ at $X^+/X^-$ chasing to the interval $[0,1]$ at $Y^+/Y^-$. Symmetrically, $YX=X$ is equivalent the interval $[0,1]$ at $Y^+/Y^-$ chasing to the interval $[0,1]$ at $X^+/X^-$. So the result follows from the Universal Isomorphism Theorem.
\end{proof}

\begin{lemma}[Butterfly Lemma] \label{butterfly}
For any two subfactors $X$ and $Y$ of a group $G$ we have: $$(YX)(XY)=XY,\quad (XY)(YX)=YX.$$ When $Y^+X$ and $X^+Y$ are conormal, $XY$ and $YX$ are also subfactors and  $Y^+X/Y^-X$ is isomorphic to $X^+Y/X^-Y$.
\end{lemma}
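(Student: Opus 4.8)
The plan is to handle the three assertions in order, reducing each to a result already available. The two identities $(YX)(XY)=XY$ and $(XY)(YX)=YX$ form the substantive part; granting them, the claim that $XY$ and $YX$ are subfactors is read off at once from the second half of the Subfactor Projection Lemma, and the isomorphism $Y^+X/Y^-X\cong X^+Y/X^-Y$ drops out of Theorem~\ref{ThmA}.

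For the identities I would start from the first half of Lemma~\ref{proj}, which already yields $(YX)Y=XY$ (using that $X$ is a subfactor) and, symmetrically, $(XY)X=YX$ (using that $Y$ is a subfactor). What is left is to replace the ambient interval $Y$ by the subinterval $XY\subseteq Y$. I would isolate the following order-theoretic observation on the projection $ZB=(Z\wedge B^+)\vee B^-$: for intervals $B\subseteq Y$ and any subgroup $W$ whose projection into $Y$ lands in $B$, that is $B^-\subseteq WY\subseteq B^+$, one has $WB=WY$. Both inclusions are immediate from $B^+\subseteq Y^+$, $Y^-\subseteq B^-$, and the standing hypothesis $B^-\subseteq WY\subseteq B^+$; in particular $W\wedge Y^+\subseteq WY\subseteq B^+$ forces $W\wedge Y^+\subseteq W\wedge B^+$, and no modular law is needed. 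Taking $B=XY$ and letting $W$ run over the two endpoints of $YX$, whose $Y$-projections are exactly the endpoints of $XY$ by $(YX)Y=XY$, this observation gives $(YX)(XY)=(YX)Y=XY$; the companion identity follows by interchanging $X$ and $Y$. These hold for arbitrary subfactors, with no conormality hypothesis, as the statement requires.

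Under the conormality hypotheses, the second half of Lemma~\ref{proj} makes $YX$ a subfactor (since $X$ is a subfactor and $(YX)^+=Y^+X$ is conormal) and, symmetrically, makes $XY$ a subfactor. I would then apply Theorem~\ref{ThmA} to the pair of subfactors $YX$ and $XY$: its two triggering conditions read precisely $(YX)(XY)=XY$ and $(XY)(YX)=YX$, already established, so the zigzag
$$\xymatrix@!=36pt{Y^+X/Y^-X & Y^+X\ar@{->>}[l]\ar@{^(->}[r] & G & X^+Y\ar@{_(->}[l]\ar@{->>}[r] & X^+Y/X^-Y}$$
induces an isomorphism. This is exactly $Y^+X/Y^-X\cong X^+Y/X^-Y$, realized as the relational composite of the canonical maps, which also displays its canonicity.

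The main obstacle I anticipate is the reduction in the second paragraph: recognizing that $(YX)(XY)$ may be computed as $(YX)Y$, the content being the small lemma that projecting a subgroup into a subinterval agrees with projecting it into the ambient interval precisely when the latter projection already falls inside the subinterval. Once this is in place, the remaining steps are direct invocations of Lemma~\ref{proj} and Theorem~\ref{ThmA}.
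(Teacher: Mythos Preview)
Your proof is correct and follows essentially the same route as the paper's: both derive $(YX)(XY)=XY$ from the identity $(YX)Y=XY$ of Lemma~\ref{proj} by a short lattice-theoretic passage from the ambient interval $Y$ to its subinterval $XY$, and both finish via the second half of Lemma~\ref{proj} and Theorem~\ref{ThmA}. Your packaging of that passage as the standalone observation ``$WB=WY$ whenever $B\subseteq Y$ and $WY\in B$'' handles the two endpoints uniformly, whereas the paper treats $(YX)^-(XY)$ and $(YX)^+(XY)$ by separate direct chains of inclusions; otherwise the arguments coincide.
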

\begin{proof}
From Lemma~\ref{proj} it follows that \begin{align*}
(YX)^-\wedge(XY)^+ &\subseteq (YX)^-\wedge Y^+\\ &\subseteq((YX)^-\wedge Y^+)\vee Y^- \\ &=(YX)^-Y\\ &=((YX)Y)^-\\ &=(XY)^-.
\end{align*}
Therefore, $(YX)^-(XY)=(XY)^-$. We also have $(YX)^+(XY)=(XY)^+$ since, similarly as above, 
\begin{align*}(YX)^+\wedge(XY)^+ &\subseteq (YX)^+\wedge Y^+\\ &\subseteq((YX)^+\wedge Y^+)\vee Y^-=(YX)^+Y\\ &=((YX)Y)^+\\ &=(XY)^+.\end{align*}
So $(YX)(XY)=XY$. Symmetrically, we get $(XY)(YX)=YX$. By Lemma~\ref{proj}, when $Y^+X$ and $X^+Y$ are conormal, $XY$ and $YX$ are subfactors. Since they project onto each other by the first part of the lemma, the corresponding quotients $$Y^+X/Y^-X=(YX)^+/(YX)^-\textrm{ and }X^+Y/X^- Y=(XY)^+/(XY)^-$$ must then be isomorphic by Theorem~\ref{ThmA}. 
\end{proof}

In the abstract context of a noetherian form, we define the classical notion of a \emph{subnormal series} (also called \emph{normal series}) of a group $G$ to be a chain of distinct subgroups 
\begin{equation}\label{EquD}
G=X_0 \supset X_1 \supset \dots \supset X_n=0_G
\end{equation}
such that each interval $[X_{i+1},X_i]$ is a subfactor. Here $A\supset B$ is the conjunction of $B\subseteq A$ and $A\neq B$. A \textit{refinement} of a subnormal series is a subnormal series that contains all the subgroups of the original series. Following Isbell, we further define a subnormal series (\ref{EquD}) of a group $G$ to be \textit{projectively isomorphic} to a subnormal series \begin{equation}\label{EquE} G=Y_0 \supset Y_1 \supset \dots \supset Y_m=0_G
\end{equation} of $G$ if each subfactor $[X_{i+1},X_i]$ of the first series projects onto a subfactor $[Y_{j+1},Y_j]$ of the second series such that $[Y_{j+1},Y_j]$ projects onto $[X_{i+1},X_i]$. Note that as defined, the relation of being `projectively isomorphic' is not symmetric. Symmetry of this relation follows from the second half of the following lemma, which abstracts Lemma~2 from \cite{Isb}.

\begin{lemma}\label{proj iso terms}
Given a subnormal series (\ref{EquE}), if $[Y_{j+1},Y_j]$ projects onto a subfactor $X=[X^-,X^+]$, then $X^-$ and $X^+$ have the same projection in any other subfactor $[Y_{k+1},Y_k]$ where $k\neq j$. In particular, if a subnormal series (\ref{EquD}) is projectively isomorphic to a subnormal series (\ref{EquE}), then the two subnormal series have the same number of terms and each subfactor $[X_{i+1},X_i]$ in the first series projects onto a unique subfactor $[Y_{j+1},Y_{j}]$ in the other series (and $[Y_{j+1},Y_{j}]$ projects onto $[X_{i+1},X_i]$).  
\end{lemma}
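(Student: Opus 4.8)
The plan is to treat the first assertion as the technical heart of the lemma and to deduce the ``in particular'' clause from it by a bijective-correspondence argument. For the first assertion, set $W=[Y_{k+1},Y_k]$ and unpack the hypothesis that $[Y_{j+1},Y_j]$ projects onto $X=[X^-,X^+]$: by the definition of projection, $[Y_{j+1},Y_j]X=X$ is the conjunction of
$$Y_{j+1}\wedge X^+\subseteq X^-\qquad\text{and}\qquad (Y_j\wedge X^+)\vee X^-=X^+.$$
The desired conclusion is the single identity $X^-W=X^+W$, that is $(X^-\wedge Y_k)\vee Y_{k+1}=(X^+\wedge Y_k)\vee Y_{k+1}$; since $X^-\subseteq X^+$, only the inclusion $X^+\wedge Y_k\subseteq(X^-\wedge Y_k)\vee Y_{k+1}$ needs to be shown. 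I would split according to the position of $W$ relative to $[Y_{j+1},Y_j]$ in the series.

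When $k>j$ we have $Y_k\subseteq Y_{j+1}$, so $X^+\wedge Y_k\subseteq X^+\wedge Y_{j+1}\subseteq X^-$ by the first relation, whence $X^+\wedge Y_k=X^-\wedge Y_k$ and the inclusion is immediate. The case $k<j$, where $Y_j\subseteq Y_{k+1}\subseteq Y_k$, is the step I expect to be the main obstacle: it rests on the identity
$$X^+\wedge Y_k=(Y_j\wedge X^+)\vee(X^-\wedge Y_k),$$
obtained by meeting the second relation $(Y_j\wedge X^+)\vee X^-=X^+$ with $Y_k$. Granting it, joining with $Y_{k+1}$ and using $Y_j\wedge X^+\subseteq Y_j\subseteq Y_{k+1}$ absorbs the first summand and gives $X^+W=(X^-\wedge Y_k)\vee Y_{k+1}=X^-W$. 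The subtlety is that this identity is an instance of the modular law, which is unavailable in general. The device I would use is that the left-hand side already lies in $X^+$, so meeting with $Y_k$ may be replaced by meeting with $X^+\wedge Y_k$, which is conormal by Axiom~\ref{AxB} as a meet of the conormal subgroups $X^+$ and $Y_k$. One is thereby meeting inside $S=X^+$, in which $X^-$ is normal because $X^-\lhd X^+$; the Less Restricted Modular Law (Corollary~\ref{RML2}) then applies with $S=X^+$, with $X^+\wedge Y_k$ in the role of $Z$ and $X^-$ in the role of $Y$, yielding the identity.

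For the ``in particular'' clause I would assign to each subfactor $[X_{i+1},X_i]$ of the first series the subfactor $[Y_{\sigma(i)+1},Y_{\sigma(i)}]$ onto which it projects and which projects back onto it, $\sigma(i)$ existing by hypothesis. The first assertion makes $\sigma$ well defined and injective: were $[X_{i+1},X_i]$ to project onto two distinct $[Y_{j+1},Y_j]$ and $[Y_{j'+1},Y_{j'}]$ with both projecting back, then, since $[Y_{j+1},Y_j]$ projects onto $[X_{i+1},X_i]$, the endpoints of $[X_{i+1},X_i]$ would have equal projection in $[Y_{j'+1},Y_{j'}]$, contradicting that $[X_{i+1},X_i]$ projects onto the nondegenerate $[Y_{j'+1},Y_{j'}]$; injectivity follows by the same argument applied to the first series. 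Surjectivity I would obtain by fixing $j$ and inspecting the chain $Y_j=X_0W\supseteq X_1W\supseteq\cdots\supseteq X_nW=Y_{j+1}$ of projections onto $W=[Y_{j+1},Y_j]$: the first assertion forces every step $[X_{i+1}W,X_iW]$ to be degenerate except when $\sigma(i)=j$, so if $j$ lay outside the image of $\sigma$ the chain would collapse, giving $Y_j=Y_{j+1}$ and contradicting the distinctness of the terms. Hence $\sigma$ is a bijection, the two series have the same number of terms, and the symmetry of projective isomorphism follows.
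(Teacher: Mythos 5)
Your proof is correct, and the ``in particular'' clause is handled essentially as in the paper (the same $\sigma$, the same uniqueness argument from the first assertion, and the same collapsing-chain argument $Y_j=X_0W\supseteq\dots\supseteq X_nW=Y_{j+1}$ for surjectivity). Where you genuinely diverge is in the first assertion for the case $k<j$: the paper avoids any fresh lattice computation by observing that the projection of $W=[Y_{k+1},Y_k]$ in $X$ is already degenerate (equal to $[X^+,X^+]$ when $k<j$, to $[X^-,X^-]$ when $k>j$, by monotonicity of projection), and then invoking Lemma~\ref{proj} in the form $XW=(WX)W$, so that $XW$ is the projection of a degenerate interval and hence degenerate. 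You instead re-run the modular-law argument directly: meeting $(Y_j\wedge X^+)\vee X^-=X^+$ with $Y_k$ via Corollary~\ref{RML2}, with $S=X^+$, $Z=X^+\wedge Y_k$ (conormal by Axiom~\ref{AxB}) and $X^-\lhd X^+$ in the role of the normal term. Your instantiation of the hypotheses of Corollary~\ref{RML2} checks out, so the computation is sound; it buys a self-contained verification at the cost of duplicating the work that the paper has already packaged into the Subfactor Projection Lemma, whose whole point in \cite{Isb} and here is to make this step a one-line consequence of $(WX)W=XW$. Your $k>j$ case coincides with what the degenerate-projection argument gives in that direction, just phrased element-of-lattice-wise.
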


\begin{proof}
Suppose $[Y_{j+1},Y_j]$ projects onto a subfactor $X=[X^-,X^+]$. We will show that $X^-$ and $X^+$ have the same projection in every other subfactor $[Y_{k+1},Y_k]$. First, we consider the case when $k<j$. In this case, $Y_j\subseteq Y_{k+1}\subseteq Y_k$. Then, since the projection of $Y_j$ in $X$ is $X^+$, the projection of $[Y_{k+1},Y_k]$ in $X$ is $[X^+,X^+]$. After this, it follows by Lemma~\ref{proj} that $X$ and $X^+$ have the same projection in $[Y_{k+1},Y_k]$. In the case when $k>j$, we have $Y_{k+1}\subseteq Y_k\subseteq Y_{j+1}$. Since $Y_{j+1}$ projects to $X^-$ in $X$, we get that both $Y_{k+1}\subseteq Y_k$ also project to $X^-$ in $X$. Then, similarly as above, $X^-$ and $X^+$ will have the same projection in $[Y_{k+1},Y_k]$.

For the second part, suppose a subnormal series (\ref{EquD}) is projectively isomorphic to a subnormal series (\ref{EquE}). Then each subfactor $[X_{i+1},X_i]$ in the first series projects onto a subfactor $[Y_{j+1},Y_j]$ of the second series such that $[Y_{j+1},Y_j]$ projects onto $[X_{i+1},X_i]$. Now, by the first part of the lemma, $X_{i+1}$ and $X_i$ will project to the same subgroup in any other subfactor $[Y_{k+1},Y_k]$, $k\neq j$ in (\ref{EquE}). Since all subgroups in the series (\ref{EquE}) are distinct, this implies that $[Y_{j+1},Y_j]$ is the unique subfactor in (\ref{EquE}) onto which $[X_{i+1},X_i]$ projects. Next, we will show that for every subfactor $[Y_{j+1},Y_j]$ in (\ref{EquE}) there is a unique subfactor $[X_{i+1},X_i]$ in (\ref{EquE}) which projects onto $[Y_{j+1},Y_j]$. Project all subgroups from the series (\ref{EquD}) in the subfactor $[Y_{j+1},Y_j]$. By monotonicity of projection, we will get a decreasing sequence of subgroups in the interval $[Y_{j+1},Y_j]$. This sequence will start with $Y_j$ (projection of $1$) and end with $Y_{j+1}$ (projection of $0$). Recall that each $[X_{i+1},X_i]$ either projects onto $[Y_{j+1},Y_j]$ or else, $X_{i+1}$ and $X_i$ project to the same subgroup in $[Y_{j+1},Y_j]$. This implies that there must exist unique $[X_{i+1},X_i]$ which projects onto $[Y_{j+1},Y_j]$, which is what we wanted to show. Thus, the mapping a subfactor from (\ref{EquD}) to a subfactor in (\ref{EquE}) onto which it projects is a bijection from the set of subfactors $[X_{i+1},X_i]$ in (\ref{EquD}) to the set of subfactors $[Y_{j+1},Y_j]$ in (\ref{EquE}). This implies that the series (\ref{EquD}) and (\ref{EquE}) must have the same number of terms. 
\end{proof}

The final and the main result of \cite{Isb} can now be recovered in the following form, in our abstract context. It can be derived from the previous results in a similar way as this is done in \cite{Isb}.

\begin{theorem}[Refinement Theorem]\label{refinement} Given two subnormal series (\ref{EquD}) and (\ref{EquE}) in a group $G$, the projections of the $Y_j$ in all $[X_{i+1},X_i]$ and the projections of the $X_i$ in all $[Y_{j+1},Y_j]$ form projectively isomorphic subnormal series, provided all of these projections except possibly $0_G$ are conormal.
\end{theorem}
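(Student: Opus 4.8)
The plan is to identify the two refined chains with the families of projected subfactors furnished by the Butterfly Lemma, and then read off projective isomorphism directly. For brevity write $\mathbf{X}_i=[X_{i+1},X_i]$ and $\mathbf{Y}_j=[Y_{j+1},Y_j]$ for the subfactors of the two given series. Projecting the subgroups $Y_0,\dots,Y_m$ into a fixed $\mathbf{X}_i$ produces $Y_j\mathbf{X}_i=(Y_j\wedge X_i)\vee X_{i+1}$; since $Y_0=G$ projects to $X_i$ and $Y_m=0_G$ projects to $X_{i+1}$, and since projection is monotone in its first argument, these subgroups descend from $X_i$ to $X_{i+1}$, with consecutive pairs forming the projected intervals $\mathbf{Y}_j\mathbf{X}_i=[Y_{j+1}\mathbf{X}_i,Y_j\mathbf{X}_i]$. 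Threading these across all $i$ gives a chain from $G$ to $0_G$ refining the first series; symmetrically, the projections $X_i\mathbf{Y}_j$ refine the second series with steps $\mathbf{X}_i\mathbf{Y}_j$.

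First I would verify that each chain is a subnormal series. The gluing is automatic: inside $\mathbf{X}_i$ the top of the step indexed $j$ equals the bottom of the step indexed $j-1$, and at the junction of $\mathbf{X}_{i-1}$ and $\mathbf{X}_i$ both relevant endpoints equal $X_i$. That each proper step $\mathbf{Y}_j\mathbf{X}_i$ is a subfactor is exactly the ``moreover'' clause of Lemma~\ref{proj}, whose hypothesis is that $(\mathbf{Y}_j\mathbf{X}_i)^+=Y_j\mathbf{X}_i$ be conormal --- which is precisely what the proviso supplies for every projection other than $0_G$. Distinctness needs attention, since a subnormal series is required to strictly decrease: I would discard the trivial steps (those with $Y_{j+1}\mathbf{X}_i=Y_j\mathbf{X}_i$), noting that this discarding is synchronized on the two sides because $\mathbf{Y}_j\mathbf{X}_i$ and $\mathbf{X}_i\mathbf{Y}_j$ project onto each other and so collapse together.

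The projective isomorphism is then immediate from the Butterfly Lemma (Lemma~\ref{butterfly}): for each pair $(i,j)$ it gives $(\mathbf{Y}_j\mathbf{X}_i)(\mathbf{X}_i\mathbf{Y}_j)=\mathbf{X}_i\mathbf{Y}_j$ and $(\mathbf{X}_i\mathbf{Y}_j)(\mathbf{Y}_j\mathbf{X}_i)=\mathbf{Y}_j\mathbf{X}_i$, so the $(i,j)$-step of the refined first series and the $(i,j)$-step of the refined second series project onto one another. This is exactly the defining relation of projective isomorphism, realized by the pairing $(i,j)\mapsto(i,j)$; Lemma~\ref{proj iso terms} then confirms that the matching of nontrivial subfactors is a bijection and that the two refinements have the same length.

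The step I expect to be the main obstacle is the bookkeeping around subnormality: confirming that the projected intervals genuinely assemble into chains whose every proper step is a subfactor, and that the collapse of trivial steps on the two sides is indeed synchronized. Once the chains are known to be well-formed subnormal series, the conormality hypothesis is tailored so that Lemmas~\ref{proj} and~\ref{butterfly} apply termwise, and the projective isomorphism follows with no further work.
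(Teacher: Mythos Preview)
Your proposal is correct and follows essentially the same route as the paper: form the two refined chains of projections, invoke the conormality proviso so that Lemma~\ref{proj} (equivalently, the ``moreover'' part of Lemma~\ref{butterfly}) makes each nontrivial step a subfactor, discard trivial steps (noting, as the paper does, that a step collapses on one side iff it collapses on the other because the two projected intervals project onto each other), and then read off projective isomorphism from Lemma~\ref{butterfly}. Your closing appeal to Lemma~\ref{proj iso terms} is not needed for the statement as formulated, but it is harmless.
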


\begin{proof} Consider the projections $Y_j^i=Y_j[X_{i+1},X_i]$ of $Y_j$ in $[X_{i+1},X_i]$ and the projections $X_i^j=X_i[Y_{j+1},Y_j]$ of $X_i$ in $[Y_{j+1},Y_j]$. We get the series
\begin{equation}
\label{EquF}    
X_0=Y_0^0\supseteq\dots\supseteq Y_m^0=X_1=Y_0^1\supseteq\dots\supseteq Y_m^1=X_2=Y_0^2\supseteq\dots \supseteq Y_m^{n-1}=X_n
\end{equation}
in the first case, and the series
\begin{equation}
\label{EquG}Y_0=X_0^0\supseteq\dots\supseteq X_n^0=Y_1=X_0^1\supseteq\dots\supseteq X_n^1=Y_2=X_0^2\supseteq\dots \supseteq X_n^{m-1}=Y_m
\end{equation}
in the second case. Note that these two series have the same number of terms: there are $mn+1$ many terms in each case. If all terms in these series are conormal (except possibly $0$), then each $[Y_{j+1}^i,Y_{j}^i]$ and each $[X_{i+1}^j,X_i^j]$ is a subfactor, by Lemma~\ref{butterfly}. Thus, eliminating duplicate terms in (\ref{EquF}) and in (\ref{EquG}) results in subnormal series. Consider a subfactor $[Y_{j+1}^i,Y_{j}^i]$ from the first subnormal series. Then $Y_{j+1}^i\neq Y_{j}^i$. Since $$[Y_{j+1}^i,Y_{j}^i]=[Y_{j+1},Y_j][X_{i+1},X_i]\textrm{ and } [X_{i+1}^j,X_{i}^j]=[X_{i+1},X_i][Y_{j+1},Y_j],$$
by the Butterfly Lemma (Lemma~\ref{butterfly}), $[Y_{j+1}^i,Y_{j}^i]$ and $[X_{i+1}^j,X_{i}^j]$ project onto each other and hence, in particular, $X_{i+1}^j\neq X_{i}^j$ and so $[X_{i+1}^j,X_{i}^j]$ is a subfactor in the second subnormal series. So the two subnormal series are projectively isomorphic.  
\end{proof}

\section{Counterexample to Isbell's claim on coarsest refinements}\label{SecC}

The refinement theorem formulated in \cite{Isb} includes two additional statements to what we have included in the abstract version of the theorem (Theorem~\ref{refinement}): 
\begin{itemize}
\item[(E1)] Every subfactor $[X^-,X^+]$ contained in some $[X_{i+1},X_i]$ and onto which some subfactor smaller than $[Y_{j+1},Y_j]$ projects is contained in the projection $$[Y_{j+1},Y_j][X_{i+1},X_i].$$

\item[(E2)] Therefore the indicated refinements are the coarsest projectively isomorphic refinements.
\end{itemize}
These claims, however, are false even in the context of abelian groups. 
Consider the cyclic group $G=\mathbb{Z}_{6}$, whose elements are $0,1,2,3,4,5$. The subgroup lattice of $\mathbb{Z}_{6}$ is the diamond:
$$\xymatrix@!=15pt{ & \mathbb{Z}_{6}=\{0,1,2,3,4,5\} & \\ \{0,3\}\ar@{-}[ur] & & \{0,2,4\}\ar@{-}[ul] \\ & \{0\}\ar@{-}[ul]\ar@{-}[ur] & }$$
Consider the following subnormal series in this group:
$$X_0=\{0,1,2,3,4,5\}\supset X_1=\{0\},\quad Y_0=\{0,1,2,3,4,5\}\supset Y_1=\{0,2,4\}\supset Y_2=\{0\}.$$
The subfactor $[X^-,X^+]=[\{0,3\},\{0,1,2,3,4,5\}]$ is contained in $[X_1,X_0]$. The subfactor $[Y_2,Y_1]$ is contained in itself. However,  
\begin{align*}
[Y_2,Y_1][X^-,X^+]&=[(Y_2\wedge X^+)\vee X^-, (Y_1\wedge X^+)\vee X^-]\\
&=[\{0,3\},\{0,1,2,3,4,5\}]\\
&=[X^-,X^+]
\end{align*}
is not contained in the projection
$$[Y_2,Y_1][X_1,X_0]=[Y_2,Y_1],$$
unlike what is claimed in (E1). Now, to contradict (E2), we first note that the projectively isomorphic refinements produced by the two given subnormal series, using the construction described in Theorem~\ref{refinement}, are both the series $Y_0\supset Y_1\supset Y_2$. However, this pair of projectively isomorphic refinements is not the coarsest pair. The subnormal series
$$X_0\supset \{0,3\}\supset X_1$$
and $Y_0\supset Y_1\supset Y_2$
are projectively isomorphic. While they are refinements of the series $X_0\supset X_1$ and $Y_0\supset Y_1\supset Y_2$, respectively, the first of these is not coarser than $Y_0\supset Y_1\supset Y_2$.

The proof of (E1) and (E2) in \cite{Isb} is a single sentence claiming that the following implication holds:
$$[Y^{-},Y^+]\subseteq [Y_{j+1},Y_j]\Rightarrow [Y^{-},Y^+][X_{i+1},X_i]\subseteq [Y_{j+1},Y_j][X_{i+1},X_i].$$
While this claim holds indeed, it does not imply (E1), since $[Y^{-},Y^+][X_{i+1},X_i]$ need not be the same as $[X^{-},X^{+}]$ in (E1) (where $[Y^{-},Y^+]$ is the subfactor contained in $[Y_{j+1},Y_j]$ which projects onto $[X^{-},X^{+}]$). Indeed, in the counterexample above, $$[Y^{-},Y^+][X_{i+1},X_i]=[Y_2,Y_1][X_1,X_0]=[Y_2,Y_1]\neq [X^{-},X^{+}].$$

\section{On other abstract approaches to the butterfly lemma}\label{SecD}

The butterfly lemma and the refinement theorem have been abstracted to numerous general contexts. For instance, already the book on group theory by Zassenhaus \cite{zassenhaus} (originally published in 1958), discusses an abstraction in general lattices. Our context is more nuanced in that, firstly, we care about constructing the actual isomorphisms between groups instead of just establishing correspondences inside a subgroup lattice and secondly, because of the self-dual context, not all subgroups are necessarily conormal in our context. The first of these nuances requires an interplay between a lattice-theoretic context and a categorical context. The context of a $\gamma$-category of Burgin \cite{Bur}, for instance, provides such interplay. As remarked in \cite{BorGra}, this context brings together semi-abelian categories and Puppe-Mitchell exact categories \cite{Pup62,Mit65} (which are the same as pointed Grandis exact categories). It is evident from the properties of these categories discussed in \cite{BorGra} that subobjects in a $\gamma$-category constitute a noetherian form of the category if and only if they admit binary joins (a requirement fulfilled in the case of semi-abelian categories, but which is also expected to be fulfilled in the standard non-semi-abelian examples, such as the category of torsion-free abelian groups mentioned in \cite{BorGra}, for instance). In this noetherian form, all subgroups will be conormal. So up to the mild limitation of existence of joins of subobjects, the Jordan-H\"older Theorem obtained in \cite{BorGra} could be deduced from our refinement theorem as a consequence. It should be noted, however, that the approach in the proof of the Jordan-H\"older Theorem in \cite{BorGra} is different to a classical approach that passes via the refinement theorem.

Two different but similar approaches in abstracting the butterfly lemma are given in \cite{Wyl} and \cite{Tho}, which have inspired an abstraction in \cite{NS} to a fairly general framework based on the notion of a star-regular category \cite{gju12}. The contexts subsumed here include all regular categories \cite{BGvO} having certain pushouts (the case considered in \cite{Tho}). In this case, the role of subgroups and the subgroups normal to those are played by entities of different types, which makes it unclear how subnormal series could be defined and hence whether a refinement theorem can be established. It is of course nevertheless interesting that a form of the butterfly lemma can be established in cocomplete regular categories. The class of cocomplete regular categories is significantly wider than the class of semi-abelian categories. In fact, it includes all quasi-varieties of universal algebras. As shown in \cite{Jan14}, the formula $f^{-1}f(X)=X\vee \mathsf{Ker}f$ of Axiom~\ref{AxA} holds for the form of subobjects in a regular category if and only if it is protomodular \cite{Bou}. So the form of subobjects in a (cocomplete) regular category need not be noetherian. This does not exclude the possibility of there being another form over a (cocomplete) regular category that is noetherian.

The context of \cite{Wyl} is better comparable to that of a noetherian form. The refinement theorem has been obtained in this context under an additional mild axiom in \cite{FriWyl}. The axioms in \cite{Wyl,FriWyl} are fairly weak and hold, for instance, in ideal-determined categories \cite{JMTU}, which is a wider class of categories than the class of semi-abelian categories: an ideal determined category is semi-abelian if and only if it is protomodular. The form of subobjects of an ideal-determined category misses to be noetherian precisely because of the failure of the formula $f^{-1}f(X)=X\vee \mathsf{Ker}f$ of Axiom~\ref{AxA}. The reason why this formula is not needed for the approach to the butterfly lemma in \cite{Wyl} is that there, already in the formulation of the butterfly lemma, Wyler replaces the join $S\vee N$ of subgroups, when $N$ is a normal subgroup, with the construction $\pi_N^{-1}\pi_N(S)$.

While in some ways the context of a noetherian form is more restrictive than the categorical contexts discussed above, in other ways it is less restrictive: the contexts of \cite{Wyl,Bur,BorGra}, for instance, require pointedness of the category, while the contexts of \cite{Tho,NS} require existence of finite limits (and moreover, regularity of the category). We should also point out that the abstract approaches to the butterfly lemma and the refinement theorem discussed above do not establish these results in their full strength as formulated by Isbell \cite{Isb}, which was the main goal of the present paper.


\begin{thebibliography}{28}

\bibitem{BGvO} M.~Barr, P.~A.~Grillet, and D.~H.~van~Osdol, \emph{Exact categories and categories of sheaves}, Springer Lecture Notes in Mathematics \textbf{236}, 1971.

\bibitem{BorGra} F.~Borceux and M.~Grandis, \emph{Jordan–H\"older, modularity and distributivity in non-commutative algebra}, Journal of Pure and Applied Algebra \textbf{208}, 2007, 665-689.

\bibitem{Bou} D.~Bourn, \emph{Normalization equivalence, kernel equivalence, and affine categories}, Lecture Notes in Mathematics 1488, Springer, 1991, 43-62.

\bibitem{Bur} 
M.~S.~Burgin, \emph{Categories with involution and correspondences in $\gamma$-categories}, Trudy Moskovskogo Matematicheskogo Obshchestva~22, 1970, 161-228.

\bibitem{Day}
K.~Dayaram, \emph{Diagram lemmas of homological algebra for
non-abelian group-like structures}, MSc Thesis, University of Johannesburg, 2021.
\url{https://hdl.handle.net/10210/473916}. 

\bibitem{Day1}
K.~Dayaram, A.~Goswami, Z.~Janelidze and T.~Van~der~Linden, \emph{Duality in non-abelian algebra V. Homological diagram lemmas}, arXiv preprint, 2023.
\url{https://arxiv.org/abs/2303.11769v1}. 

\bibitem{FriWyl}
R.~Fritsch and O.~Wyler, \emph{The Schreier refinement theorem for categories}, Archiv der Mathematik \textbf{XXII}, 1971, 570-572.

\bibitem{dnaiv}
A.~Goswami and Z.~Janelidze, 
\emph{Duality in non-abelian algebra IV. Duality for groups and a universal isomorphism theorem}, Advances in Mathematics \textbf{349}, 2019, 781-812.

\bibitem{gju12}
M.~Gran, Z.~Janelidze and A.~Ursini, \emph{A good theory of ideals in regular multi-pointed categories}, J. Pure Appl. Alg. 216, 1905-1919, 2012.

\bibitem{Gra}
M.~Grandis, \emph{Homological Algebra: The interplay of homology with distributive lattices and orthodox semigroups}, World Scientific Publishing Co., Singapore, 2012.

\bibitem{Gra2}
M.~Grandis, \emph{Homological algebra in strongly non-abelian settings}, World Scientific Publishing Co., Singapore, 2013.

\bibitem{Isb}
J.~R.~Isbell, \emph{Zassenhaus' theorem supersedes the Jordan-H\"older theorem}, Advances in Mathematics \textbf{31}(1), 1979, 101-103.

\bibitem{semi abelian}
G.~Janelidze, L.~M\'arki and W.~Tholen, \emph{Semi-abelian categories}, Journal of Pure and Applied Algebra \textbf{168}, 2002, 367-386.

\bibitem{JMTU} G.~Janelidze, L.~M\'arki, W.~Tholen and A. Ursini, \emph{Ideal determined categories}, Cahiers de Topologie et G\'eom\'etrie Diff\'erentielle Cat\'egoriques 51, 2010, 113-124.

\bibitem{Jan14} Z.~Janelidze, \emph{On the form of subgroups in semi-abelian and regular protomodular categories}, Applied Categorical Structures \textbf{22}, 2014, 755-766.

\bibitem{Jan}
Z.~Janelidze and F.~K.~van~Niekerk, \emph{On the axiomatization of an optimal noetherian form over the category of sets}, arXiv preprint, 2024.\\
\url{https://arxiv.org/abs/2304.03814}

\bibitem{dnai} Z.~Janelidze and T.~Weighill, \emph{Duality in non-abelian algebra I. From cover relations to Grandis ex2-categories}, Theory and Applications of Categories \textbf{29}(11), 2014, 315–331.

\bibitem{Law70} F.~W.~Lawvere, \emph{Equality in hyperdoctrines and comprehension schema as an adjoint functor}, Proceedings of the AMS Symposium on Pure Mathematics XVII, 1970, 1-14. 

\bibitem{Mit65} B.~Mitchell, \emph{Theory of Categories}, Pure and Applied Mathematics 17, Academic Press, 1965.

\bibitem{NS} O.~Ngaha~Ngaha and F.~Sterck, \emph{The Zassenhaus lemma in star-regular categories}, Theory and Applications of Categories \textbf{34}(38), 2019, 1196-1219.

\bibitem{Pup62} D.~Puppe, \emph{Korrespondenzen in abelschen Kategorien}, Mathematische Annalen \textbf{148}(1), 1962, 1-30.

\bibitem{Tho} W.~Tholen, \emph{Relative Bildzerlegungen und algebraische Kategorien}, Phd Thesis, 1974.

\bibitem{Nie17} F.~K.~van~Niekerk, \emph{Contributions to projective group theory}, MSc Thesis, Stellenbosch University, 2017.\\
\url{http://scholar.sun.ac.za/handle/10019.1/102912}

\bibitem{Nie19a} F.~K.~van~Niekerk, \emph{Biproducts and commutators for Noetherian forms}, Theory and Applications of Categories \textbf{34}, 2019, 961-992.

\bibitem{Nie19b} F.~K.~van~Niekerk, \emph{Concrete foundations of the theory of Noetherian forms}, PhD Thesis, Stellenbosch University, 2019.\\ 
\url{http://scholar.sun.ac.za/handle/10019.1/107103}

\bibitem{Nie}
F.~K.~van~Niekerk, \emph{On the construction of noetherian forms for algebraic structures}, Theory and Applications of Categories \textbf{39}(8), 2023, 189-206.

\bibitem{Wyl}
O.~Wyler, \emph{The Zassenhaus lemma for categories}, Archiv der Mathematik \textbf{XXII}, 1971, 561-569.

\bibitem{zassenhaus}
H.~J.~Zassenhaus, \emph{The theory of groups}, Dover Publications, INC., Mineola, New York, 1999.
\end{thebibliography}
\end{document}